\documentclass[11pt]{article}

\usepackage{mathtools}
\usepackage{amsmath}
\usepackage{amsthm}
\usepackage{amssymb}
\usepackage{mathrsfs}
\usepackage{graphicx}
\usepackage{upgreek}
\newcommand\tbbint{{-\mkern -16mu\int}}

\newcommand\dbbint{{-\mkern -19mu\int}}

\newcommand\bbint{
{\mathchoice{\dbbint}{\tbbint}{\tbbint}{\tbbint}}
}

\def\ot{{\, \otimes \,}}
\def\div{{\,\rm div \,}}

\def\id{{\,\rm id \,}}

\def\sym{{\,\rm sym \,}}

\def\dist{{\,\rm dist \,}}

\def\supp{{\,\rm supp \,}}

\def\+M{{\,\rm M^{n\times n}_+ \,}}
\def\tr{{\,\rm tr \,}}

\def\diag{{\,\rm diag \,}}
\def\supp{{\,\rm supp \,}}

\def\E{{\cal E}}
\def\W{{\cal w}}

\def\Q{{\cal Q}}

\def\E{{\cal E}}

\def\W{{\cal W}}

\def\H{{\cal H}}

\def\Q{{\cal Q}}
\def\M{{\cal M}}

\def\ka{{\kappa}}

\newfont{\Blackboard}{msbm10 scaled 1200}

\newfont{\roma}{cmr10 scaled 1200}

\def\D{{\cal D}}

\def\<{{\langle}}
\def\>{{\rangle}}

\def\Ga{\Gamma}

\def\a{\alpha}
\def\b{\beta}
\def\om{\omega}
\def\Om{\Omega}

\numberwithin{equation}{section}
\newtheorem{thm}{{}\hskip\parindent Theorem}[section]
\newtheorem{lem}{{}\hskip\parindent Lemma}[section]

\def\dsum{\displaystyle\sum}

\def\dint{\displaystyle\int}
\def\dfrac{\displaystyle\frac}

\def\be{\begin{equation}}
\def\ee{\end{equation}}
\def\beq{\arraycolsep=1.5pt\begin{eqnarray}}
\def\eeq{\end{eqnarray}}

\def\n{\vec{n}}
\title{Rigidity Estimate for Hyperbolic Shells and its
 Application in $\Ga$-limit Theory}
\date{}
\author{
Liang-Biao Chen and Peng-Fei Yao\\[0.2cm]
\nonumber
School of Mathematics and Statistics\\\nonumber
Key Laboratory of Complex Systems and Data Science of Ministry of
Education\\\nonumber
Shanxi University, Taiyuan, 030006,
China\\\nonumber
e-mail: pfyao@iss.ac.cn}

\begin{document}
\maketitle
\begin{abstract}
This paper establishes novel rigidity estimates for hyperbolic shells (surfaces with negative Gaussian curvature) and applies them to derive the \(\Gamma\)-limit of thin elastic shells. We prove a nonlinear rigidity estimate for \(H^1\) deformations on the mid-surface, and a  nonlinear rigidity estimate for hyperbolic shells with clamped lateral boundary. The latter yields the optimal exponent \(h^{-4/3}\). As the main application, we characterize the \(\Gamma\)-limit of the nonlinear elastic energy for clamped hyperbolic shells across all scaling regimes \(\beta \in [0,2) \cup (8/3,\infty)\).
\end{abstract}
\section{Main Result}
Let $M\subset \mathbb{R}^3$ be a surface in $\mathbb{R}^3$ with unit normal vector $\n$, second fundamental form $\Pi=\nabla\n$ and Gaussian curvature $\ka<0$. Let $\<,\>$ be the metric on $M$ induced by the dot product $\<,\>$ in $\mathbb{R}^3$. Let $\nabla$ and $D$ be the Riemannian connections of $(\mathbb{R}^3,\<,\>)$ and $(M,\<,\>)$, respectively. 

 Local coordinate system $\a:\om\subset\mathbb{R}^2\to M$ is called asymptotic, if 
\beq
\Pi(\a_{x_i},\a_{x_i})=0,\quad i=1,2.\label{Pi}
\eeq
Let $S$ be a parametrized subsurface of $M$. 

{\bf Assumption (M):} $S=\a(\om)$ is a minimal surface with negative Gaussian curvature, where $\a:\om\to M$ is an  asymptotic coordinate system, where $\om\subset\mathbb{R}^2$ is a compact subset of $\mathbb{R}^2$ with piecewise smooth boundary.

{\bf Assumption (R):} $S=\b([0,1]^2)$ is a ruled surface with negative Gaussian curvature, where $\b(s,t)=\sigma(s)+t\delta(s)$, $(s,t)\in [0,1]^2$.

\subsection{Rigidity Theorems}

\begin{thm}\label{thm2d}
Suppose {\bf Assumption (M)} or {\bf Assumption (R)} holds. Then there exists a constant $C>0$ such that for any $y\in H^1_0(S,\mathbb{R}^3)$, there holds 
\beq
\|W\|_{L^2(S)}
+\|w\|_{(H^1(S))'}
+\|\sym DW+w\Pi\|_{L^1(S)}
\le C\|(u_*^Tu_*)^\frac12-\id\|_{L^2(S)},
\label{2d}
\eeq
where $w=\<y,\n\>$, $W=y-w\n$ and $u_*^Tu_*$ is the metric on $S$ induced by $u(x)=x+y(x)$, $x\in S$.

\end{thm}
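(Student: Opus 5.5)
We work directly from the structure of the nonlinear strain. Write $u=x+y$, so $u_*=\id+\nabla y$ restricted to $TS$. Decompose the tangential derivative of $y=W+w\n$: for $X\in TS$,
\[
\nabla_X y=D_XW+w\Pi X+\big(X(w)-\Pi(W,X)\big)\n .
\]
Hence $u_*^Tu_*-\id=2(\sym DW+w\Pi)+\nabla y^T\nabla y$ on $TS$. The first step is to compare $\|(u_*^Tu_*)^{1/2}-\id\|_{L^2}$ with the quantities in (\ref{2d}). Pointwise, for a $3\times2$ matrix $F=u_*$ we have $|F^TF-I|\le |(F^TF)^{1/2}-I|\,(|(F^TF)^{1/2}|+1)$. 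Thus the $L^1$ norm of $u_*^Tu_*-\id$ is bounded by $\|(u_*^Tu_*)^{1/2}-\id\|_{L^2}(\|u_*\|_{L^2}+|S|^{1/2})$. Since $\|u_*\|_{L^2}\le \|(u_*^Tu_*)^{1/2}-\id\|_{L^2}+C$, we get $\|u_*^Tu_*-\id\|_{L^1}\le C(\epsilon+\epsilon^2)$, where $\epsilon$ denotes the right side of (\ref{2d}). The nonnegative term $\nabla y^T\nabla y$ is not small in $L^1$ a priori. So the key is a geometric rigidity argument in the spirit of Friesecke--James--M\"uller, applied not in 3D but on the mid-surface, where the clamped condition $y\in H^1_0$ replaces the rotation.

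The second step is to obtain $\|\nabla y\|_{L^2}\le C\epsilon$. Since $\dist(u_*,O(2,3))=|(u_*^Tu_*)^{1/2}-\id|$ pointwise, $u$ is an $H^1$ map whose differential is $L^2$-close to isometric embeddings of $TS$. For hyperbolic surfaces we use the infinitesimal rigidity of the linearized problem with clamped boundary. The linear operator
\[
(W,w)\mapsto \sym DW+w\Pi
\]
has trivial kernel in $H^1_0\times L^2$, and its cokernel is controlled by the asymptotic-coordinate structure. In asymptotic coordinates ($\Pi_{11}=\Pi_{22}=0$), the $11$ and $22$ components of $\sym DW$ involve only $W$ and give transport equations along the two asymptotic directions. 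The $12$ component determines $w=(\sym DW)_{12}/\Pi_{12}$ ($\Pi_{12}\ne0$ since $\ka<0$). Integrating the two transport equations from the boundary, where $W=0$, yields
\[
\|W\|_{L^2}\le C\|(\sym DW)_{11},(\sym DW)_{22}\|_{X},
\]
for a weak norm $X$ (an $L^1$-type or $H^{-1}$-type norm along characteristics). Under Assumption (M) the Christoffel symbols have the special form coming from minimality ($\Pi$ is traceless and the coordinates are conformal-like). Under Assumption (R) the rulings are one asymptotic family with $t$-linear structure. In both cases the ODE system along characteristics is triangular and can be integrated explicitly with Gronwall. Then $w$ is recovered in $(H^1)'$ by pairing $w\Pi_{12}$ with test functions and integrating by parts against $DW$.

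The third step handles the quadratic term, which is the main obstacle. The plan is a linearization-with-rotation argument. Apply the FJM-type estimate on small patches to get $R\in SO(3)$-valued approximations, so that $\|\nabla u-R\|_{L^2}\le C\epsilon$. Since $u=x$ on $\partial S$, $R$ is forced close to the identity frame in an averaged sense, and $\|\nabla y\|_{L^2}\le C\epsilon$. Then $\nabla y^T\nabla y$ has $L^1$ norm $\le C\epsilon^2$. Combining with step one,
\[
\|\sym DW+w\Pi\|_{L^1}\le C(\epsilon+\epsilon^2).
\]
For large $\epsilon$ we use the trivial bound in terms of $\|\nabla y\|_{L^2}^2\le C(\epsilon+\epsilon^2)$. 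The $L^1$ estimate is thus the best available, which matches the statement. Feeding this $L^1$ control of the linear strain into step two gives $\|W\|_{L^2}$ and $\|w\|_{(H^1)'}$ bounded by $C\epsilon$, using the transport estimate in $L^1\to L^2$ form. This is possible because in two dimensions, integration along one characteristic family followed by the other maps $L^1$ to $L^2$ via a Fubini/Minkowski argument with the product structure of asymptotic coordinates.

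I expect the hardest point to be the $L^1$ (rather than $L^2$) transport bound, together with controlling the non-linearity without losing a power of $\epsilon$. As a fallback, one can use $\|y\|_{L^2}\le C\|\nabla y\|_{L^2}$ (Poincaré) whenever $\epsilon\ge1$.
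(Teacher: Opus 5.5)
Your proposal has two genuine gaps, and they sit exactly where the paper uses its two key ideas.

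\textbf{Step 3 fails.} There is no Friesecke--James--M\"uller rigidity for maps from a surface into $\mathbb{R}^3$. Smallness of $\dist(u_*,O(2,3))=|(u_*^Tu_*)^{1/2}-\id|$ in $L^2$ does not force $u_*$ to be close to a constant or slowly varying rotation: nothing on the mid-surface plays the role of the length scale $h$, and the clamped condition does not help. In fact $\|\nabla y\|_{L^2}\le C\epsilon$ is false. Take $W=0$ and $w=s^2\chi\sin(x_1/s)$ with $\chi$ a fixed interior cut-off. Then $u_*^Tu_*-\id=2w\Pi+w^2\Pi^2+Dw\otimes Dw=O(s^2)$ pointwise, so $\epsilon\lesssim s^2$, while $\|\nabla y\|_{L^2}\ge\|Dw\|_{L^2}\sim s$. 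The metric change is quadratic in the slope; the same mechanism produces the relaxation measure $\mu\ge0$ in $I_\beta$.

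What is true, and what suffices, is $\|\nabla y\|_{L^2}^2\le C(\epsilon+\epsilon^2)$, i.e.\ (\ref{E}). The paper obtains it from an ingredient absent from your sketch: a tensor field $A=A^T>0$ with $\div A=0$ and $\langle A,\Pi\rangle=0$. Under Assumption (M) one takes $A=\id$; under Assumption (R), $A$ is built from asymptotic coordinates and Lemma \ref{LemmaA1}. For such $A$ one has $\langle\sym DW+w\Pi,A\rangle=\div(i_WA)$, which integrates to zero because $W=0$ on $\partial S$. Hence $\int_S\langle u_*^Tu_*-\id,A\rangle=\int_S\langle\E^2+2\E,A\rangle\le C(\epsilon^2+\epsilon)$ bounds exactly the nonnegative quadratic part of $u_*^Tu_*-\id$. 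This is the only place where (M) or (R) is used; in your proposal these assumptions play no real role.

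\textbf{Your $L^1\to L^2$ transport estimate is false.} So the linear $L^1$ bound on $\sym DW+w\Pi$ cannot give $\|W\|_{L^2}$, nor $\|w\|_{(H^1)'}$. Write $W_i=\langle W,\partial_i\rangle$ in asymptotic coordinates. Integrating an $L^1$ source along $x_1$-lines bounds $|W_1(x_1,x_2)|$ by $\int_0^1|f|(s,x_2)\,ds$, which is only $L^1$ in $x_2$. Concretely, let $W_1=g(x_1)\,k\rho(k(x_2-\frac12))$ and $W_2=0$. Set $w=-\langle\sym DW,\partial_1\otimes\partial_2\rangle/\Pi(\partial_1,\partial_2)$, which is possible since $\kappa<0$. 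Then $\|\sym DW+w\Pi\|_{L^1}=O(1)$ while $\|W\|_{L^2}\sim k^{1/2}$. Linear $L^1$ control yields only $\|W\|_{L^1}$ (Lemma \ref{WL1}).

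The paper gets $L^2$ by returning to the nonlinear strain. Lemma \ref{linear-non} gives the pointwise one-sided bound $\langle DW+w\Pi,X\otimes X\rangle\le|\E||X|^2$ with $\E\in L^2$. Hence $\partial_1(e^{\varphi_1}W_1)\le C(|\E|+|W_2|)$. Integrating from $x_1=0$ and from $x_1=1$, where $W$ vanishes, turns this one-sided bound into the two-sided bound $|W_1|\le C\int_0^1(|\E|+|W_2|)\,ds_1$. This lies in $L^2$ once $\|W\|_{L^1}$ is controlled.

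Your recovery of $w$ fails for the same reason: $\langle\sym DW+w\Pi,\partial_1\otimes\partial_2\rangle$ is only in $L^1$, and $H^1\not\subset L^\infty$ in two dimensions. The paper instead tests the one-sided bound against $\varphi_\pm\ge0$ along two fields $X_1,X_2$ with $\Pi(X_i,X_i)=(-1)^i$. This produces both signs of $\int_S\varphi_\pm w$, and hence the $(H^1)'$ bound.
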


Define the shell $S_h$ with midsurface $S$ and thickness $h>0$ by 
\[
S_h=\{
x+t\n(x):x\in S,|t|\le h/2\}, 
\]
lateral boundary $(\partial S)_h$ of $S_h$ by 
\[
(\partial S)_h=\{
x+t\n(x):x\in \partial S,|t|\le h/2\}, 
\]
and the space of displacement on $S_h$ fixing $(\partial S)_h$ by 
\[
\H^1_0(S_h,\mathbb{R}^3)
=\{y\in H^1(S_h,\mathbb{R}^3):y|_{(\partial S)_h}=0\}.
\]

\begin{thm}\label{thm3d}
Suppose {\bf Assumption (M)} or {\bf Assumption (R)} holds. Then there exist $h_0>0$ sufficiently small and constant $C>0$ such that for any $h\in(0,h_0)$ and $y\in \H^1_0(S_h,\mathbb{R}^3)$, there holds 
\beq
\|\nabla y\|^2_{L^2(S_h)}
\le Ch^{-\frac43}\|\dist(\nabla y+I,SO(3))\|^2_{L^2(S_h)}.
\label{3d}
\eeq

\end{thm}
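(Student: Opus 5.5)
We argue by the standard Friesecke--James--M\"uller strategy: approximate $\nabla y+I$ piecewise by rotations on cells of size $h$, pass to a mid-surface deformation, and then use the two-dimensional estimate \eqref{2d} of Theorem~\ref{thm2d} to control the resulting linearized displacement. Throughout write $\E=\|\dist(\nabla y+I,SO(3))\|_{L^2(S_h)}$. We may assume $\E^2\le c h^{4/3}\cdot h$ (energy per unit area below a small multiple of $h^{4/3}$). Otherwise $\|\nabla y\|^2_{L^2(S_h)}\le C|S_h|\le Ch$, and since the $L^2$ norm of $\nabla y$ is at most $C\E+C h^{1/2}$ trivially, the bound $\|\nabla y\|^2\le C h\le Ch^{-4/3}\E^2$ holds. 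Hence we may work in the small-energy regime.

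\textbf{Step 1 (local rigidity and mid-surface field).} We cover $S$ by geodesic squares of side $h$ and apply the Friesecke--James--M\"uller rigidity estimate on each cell of $S_h$, which is uniformly bi-Lipschitz to a cube. This yields a matrix field $R:S\to SO(3)$, mollified at scale $h$, with
\[
\|\nabla y+I-R\|^2_{L^2(S_h)}\le C\E^2,\qquad \|\nabla_{S} R\|^2_{L^2(S)}\le Ch^{-3}\E^2 .
\]
The boundary condition $y=0$ on $(\partial S)_h$ lets us use half-cells at the lateral boundary and gives $\|R-I\|_{L^2(\partial\text{-strip})}$ controlled. Setting $\bar y(x)=\frac1h\int_{-h/2}^{h/2}y(x+t\n)\,dt\in H^1_0(S)$, we get $\|\nabla \bar y+ P-R P\|^2_{L^2(S)}\le Ch^{-1}\E^2$, where $P$ is the tangent projection. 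Consequently
\[
\|(u_*^Tu_*)^{1/2}-\id\|_{L^2(S)}\le C h^{-1/2}\E .
\]

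\textbf{Step 2 (apply Theorem~\ref{thm2d}).} By \eqref{2d}, $\|W\|_{L^2}+\|w\|_{(H^1)'}\le Ch^{-1/2}\E$. This bounds the displacement of $\bar y$ only weakly, and in particular it does not control $\nabla\bar y$. We need $\|R-I\|_{L^2(S)}$, since $\|\nabla y\|_{L^2(S_h)}^2\le C\E^2+Ch\|R-I\|^2_{L^2(S)}$.

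\textbf{Step 3 (interpolation giving $h^{-4/3}$).} We write $R-I=A$, so that $\nabla\bar y\approx AP$ up to error $Ch^{-1/2}\E$. We then interpolate between the weak norm of $\bar y$ from Step 2 and the strong bound $\|\nabla A\|_{L^2}\le Ch^{-3/2}\E$. Heuristically, $\|\bar y\|_{L^2}$ (weak, via negative norms) is $\lesssim h^{-1/2}\E$ and $\|\nabla^2\bar y\|\lesssim h^{-3/2}\E$. An interpolation of the form $\|\nabla \bar y\|\le C\|\bar y\|_{H^{-1}}^{2/3}\|\nabla^2\bar y\|^{1/3}$ (valid for $H_0$-type functions) gives
\[
\|A\|_{L^2}\lesssim (h^{-1/2}\E)^{2/3}(h^{-3/2}\E)^{1/3}=h^{-5/6}\E .
\]
Hence $h\|A\|^2\lesssim h^{-2/3}\E^2$, which is even better than claimed. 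More honestly, we will only have $(H^1)'$ control on the normal component, with exponents $\|w\|_{(H^1)'}$ versus $\|\nabla^2 w\|$: the interpolation $\|\nabla w\|\le\|w\|_{(H^1)'}^{1/3}\|\nabla^2 w\|^{2/3}$ yields $\|A\|\lesssim h^{-7/6}\E$, i.e. $h\|A\|^2\lesssim h^{-4/3}\E^2$, matching \eqref{3d}. The tangential part is handled the same way using $\|W\|_{L^2}$, which gives a better rate. The skew part of $A$ is recovered from $\nabla\bar y$ since $\sym A\approx -\frac12A^TA$ is quadratically small.

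\textbf{Main obstacle.} The hard step is making Step 3 rigorous. First, $\bar y$ is only known to be $W^{2,2}$-close to a smooth field after mollification, so the interpolation must be applied to a regularized $\tilde y$ with boundary-zero traces, where the boundary layer of width $h$ costs an error we must show is of order $h^{-1/2}\E$. Second, the nonlinear term $A^TA$ in $\sym$ must be absorbed using the smallness assumption $\E^2\le ch^{7/3}$. We will first try to carry out the interpolation on the rescaled harmonic extension of the normal component, using duality between $(H^1_0)'$ and $H^1_0$.
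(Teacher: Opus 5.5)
Your architecture is the paper's: a Friesecke--James--M\"uller field $R$ with $\|\nabla y+I-R\circ\pi\|_{L^2(S_h)}\lesssim\varepsilon$ and $\|\nabla R\|_{L^2(S)}\lesssim h^{-3/2}\varepsilon$ (here $\varepsilon$ is your $\E$). Then a mid-surface displacement with $\|(u_*^Tu_*)^{1/2}-\id\|_{L^2}\lesssim h^{-1/2}\varepsilon$, Theorem~\ref{thm2d}, and interpolation: $L^2$--$H^2$ for the tangential part and $(H^1)'$--$H^2$ for the normal part, giving $\|(R-I)P\|_{L^2(S)}\lesssim h^{-7/6}\varepsilon$. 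But the two steps that carry the argument are not supplied.

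\textbf{First gap: the regularized mid-surface displacement.} This is the one you flag. You need a \emph{single} function in $H^1_0\cap H^2$ with $H^2$-norm $\lesssim h^{-3/2}\varepsilon$, to which \eqref{2d} also applies with right-hand side $\lesssim h^{-1/2}\varepsilon$. Your $\bar y$ has only the second property, and you do not construct the replacement. The paper does it by a Hodge decomposition of the rows of $R$. Let $\eta_i\in H^1_0$ solve $\div D\eta_i=\div\pi_*(R^T-I)e_i$; elliptic regularity gives $\|\eta_i\|_{H^2}\le C\|R-I\|_{H^1}\le Ch^{-3/2}\varepsilon$. The co-exact remainder in $\pi_*(R^T-I)e_i=QD\xi_i+D\eta_i$ satisfies $\|D\xi_i\|_{L^2}\lesssim h^{-1/2}\varepsilon$. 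This is proved by testing with $\xi_i$ and using $\div QDy_i=0$, where $y_i$ is the thickness average of $\<y,e_i\>$. Then $\xi=\sum_i\eta_ie_i$ is the required function. Mollifying $\bar y$ (extended by zero) at scale $h$ in the chart also works, since $\nabla^2(\rho_h*\bar y)=\nabla\rho_h*(\nabla\bar y-G)+\rho_h*\nabla G$ with $G=(R-I)P$ is $O(h^{-3/2}\varepsilon)$ in $L^2$. Either way, Theorem~\ref{thm2d} must be applied on a slightly enlarged surface, as the paper does. Discard your first heuristic: the exponents are reversed ($\|f\|_{H^1}\le\|f\|_{H^{-1}}^{1/3}\|f\|_{H^2}^{2/3}$), and only the tangential part is controlled in $L^2$, so $h^{-5/6}$ is not available.

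\textbf{Second gap: the normal column $(R-I)\n$.} Absorbing the quadratic term in $\sym A=-\frac12A^TA$ needs $\|R-I\|_{L^\infty}\ll1$. For $\varepsilon^2\le ch^{7/3}$ you only have $\|\nabla R\|_{L^2}\lesssim h^{-3/2}\varepsilon\lesssim c^{1/2}h^{-1/3}$, which gives no smallness. The paper gets $L^\infty$-smallness ($\|\nabla\eta\|_{L^\infty}\lesssim h^{5/33}$, using also $h^5\|\nabla^2R\|^2\lesssim\varepsilon^2$) only when $\varepsilon^2\le h^{11/3}$. In the remaining range it bounds $w_t=\<(R-I)\n,\n\>$ through $w_t(w_t+2)$ and Lemma~\ref{f(f+2)}, where the Dirichlet condition excludes a large set with $R\n\approx-\n$. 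It bounds $W_t$ through the trace identity for $(\nabla\eta)^T\nabla\eta-\nabla\eta(\nabla\eta)^T$.

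A far shorter repair of your route is algebraic. For $R\in SO(3)$ and an oriented orthonormal frame $\{E_1,E_2,\n\}$, the identity $R\n=RE_1\times RE_2$ gives $|(R-I)\n|\le|(R-I)E_1|+|(R-I)E_2|$ pointwise, with no smallness needed. For a smooth $R$ that is only close to $SO(3)$, add $\dist(R,SO(3))$, whose $L^2(S)$-norm is $\lesssim h^{-1/2}\varepsilon$. This yields $\|R-I\|_{L^2(S)}\lesssim h^{-7/6}\varepsilon$, and hence $\|\nabla y\|^2_{L^2(S_h)}\lesssim\varepsilon^2+h\|R-I\|^2_{L^2(S)}\lesssim h^{-4/3}\varepsilon^2$. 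It also makes your preliminary reduction unnecessary. That reduction's claim $\|\nabla y\|^2\le C|S_h|$ is false as stated, though $|\nabla y|\le\dist(\nabla y+I,SO(3))+C$ would have sufficed there.
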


\subsection{$\Ga$-Limits of Clamped Hyperbolic Thin Shells}
Define 
\[
\Q_3(F)
=\nabla^2\W(I_3)(F,F)\quad \mbox{for all}\quad F\in\mathbb{R}^{3\times3},
\]
and 
\[
\Q_2(x,A)
=\min\{\Q_3(F):F_{\tan}=A,\ F\in \mathbb{R}^{3\times3}\}\quad \mbox{for all}\quad A\in T^2_xS,\quad x\in S,
\]
where $F_{\tan}\in T^2_xS$ is the ristriction of $F$ on $T_xS$, i.e.,
\[F_{tan}(\tau,\mu)=\<F\tau,\mu\>\quad \mbox{for all}\quad \tau,\mu\in T_xS.
\]
Define $I_\b:L^2(S,T)\times (H^1(S))'\to[0,\infty]$ as 
\[
I_\b(W,w)
=
\begin{cases}
\frac12\dint_S\Q_2(\sym DW+w\Pi)dx,&\quad\mbox{if}\quad \b\in(\frac83,\infty)\\
\inf\left\{\dfrac12\dint_S\Q_2(\sym DW+w\Pi+\mu)dx:\ \mu\in\M(S,T^2_+S)
\right\},&\quad\mbox{if}\quad \b\in[0,\frac83).
\end{cases}
\]
Fix $h_0>0$ sufficiently small such that mapping
\[\pi:S_{h_0}\to S,\quad x+t\n(x)\mapsto x\]
is well-defined on $S_{h_0}$. Let $U=S\times[-1/2,1/2]$ be the product manifold of Riemannian manifolds $S$ and $[-1/2,1/2]$ equipped with metric
\[
\<(X_1,t_1\partial_t),(X_2,t_2\partial_t)\>=\<X_1,X_2\>+t_1t_2,\quad\mbox{for all}\quad 
(X_i,t_i)\in T_xS\times \mathbb{R}.
\]
Then the Riemannian connection $\D$ on $U$ takes the form 
\beq
\D_{X_1+t_1\partial_t}(X_2+t_2\partial_t)
=D_{X_1}X_2+t_1(\partial_tt_2)\partial_t.\label{connectionU}
\eeq
Define 
\[
\pi:S\times[-1/2,1/2]
\to S,\quad (x,t)\mapsto x,
\]
and
\[
\pi_h:S\times[-1/2,1/2]
\to S_h,\quad (x,t)\mapsto x+th\n(x).
\]
\begin{thm}\label{Gamma}
Suppose {\bf Assumption (M)} or {\bf Assumption (R)} holds. 
Given a family of displacements $\{y_h\in\H_0^1(S_h,\mathbb{R}^3)\}_{h\in(0,h_0)}$ with 
\beq
E_h(y_h):=\frac1{h}\int_{S_h}\W(\nabla y_h+I)\le Ch^\b,
\label{energy}
\eeq
consider the rescaling displacement $\eta_h$ defined on the universal manifold $U$ 
\[
\eta_h
=y_h\circ\pi_h
=W_h+w_h\n\in W^{1,2}(U,\mathbb{R}^3).
\]
Then the following holds.

{\bf Compactness:} There exist $W\in L^2(S,T)$, $w\in (H^1(S))'$ and a subsequence of $\{\eta_h\}$, still denoted by $\{\eta_h\}$, such that 
\[
h^{-\frac\b2}W_h\rightharpoonup W\circ\pi\quad \mbox{weakly in}\quad L^2(U,T),
\]
\[
h^{-\frac\b2}w_h\rightharpoonup w\circ\pi\quad \mbox{weakly in}\quad (H^1(U))',
\]
\[h^{-\frac\b2}(\sym DW_h+w_h\Pi)\rightharpoonup 
(\sym DW+w\Pi)\circ\pi \quad \mbox{weakly in}\quad [H^1(U,T^2_\sym)]',\]
and
\[h^{-\frac\b2}(\sym DW_h+w_h\Pi)\overset{*}{\rightharpoonup}
(\sym DW+w\Pi)\circ\pi \quad \mbox{weakly$^*$ in}\quad \M(U,T^2_\sym).\]

{\bf Lower Bound:} For the same subsequence in {\bf Compactness}, there holds 
\[
\liminf_{h\to 0^+}\frac{E_h(y_h)}{h^\b}\ge I_\b(W,w).
\]

{\bf Upper Bound:} Let $\b\in[0,2)\cup(\frac83,\infty)$. For each pair $(W,w)\in L^2(S,T)\times (H^1(S))'$, there exists $y_h\in\H^1_0(S_h)$ such that the convergences as shown in  {\bf Compactness} holds and 
\[
\lim_{h\to 0^+}\frac{E_h(y_h)}{h^\b}= I_\b(W,w).
\]

\end{thm}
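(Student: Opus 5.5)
The proof splits along the three claims, and Theorem \ref{thm3d} and Theorem \ref{thm2d} carry most of the weight. Throughout, I would work on $U$ through $\pi_h$. Two standing facts are used: the metric of $S_h$ pulled back by $\pi_h$ is $(1+O(h))$-close to the product metric (after the thickness is rescaled by $h$), and $\W$ satisfies the standard hypotheses: frame indifference, $\W\ge c\,\dist^2(\cdot,SO(3))$, $C^2$ near $SO(3)$ with Hessian $\Q_3$.

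\textbf{Compactness.} From (\ref{energy}) and the growth condition,
\[
\|\dist(\nabla y_h+I,SO(3))\|^2_{L^2(S_h)}\le Ch^{1+\b}.
\]
Theorem \ref{thm3d} then gives
\[
\|\nabla y_h\|^2_{L^2(S_h)}\le Ch^{1+\b-4/3}.
\]
This bound alone is too weak to control $h^{-\b/2}\eta_h$ in $L^2$, so the weak-norm convergences must come from a version of Theorem \ref{thm2d} applied on each fibre. For a.e.\ $t\in[-1/2,1/2]$, the map $x\mapsto \eta_h(x,t)$ is an $H^1_0$ displacement of the parallel surface $S$. Its induced metric stretch $(u_*^Tu_*)^{1/2}-\id$ is controlled pointwise by $\dist(\nabla y_h+I,SO(3))+Ch$, and the $O(h)$ term is removed by comparing with the metric of the parallel surface $x+th\n$. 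Applying (\ref{2d}) fibrewise and integrating in $t$ bounds
\[
h^{-\b/2}\Big(\|W_h\|_{L^2(U)}+\|w_h\|_{(H^1(U))'}+\|\sym DW_h+w_h\Pi\|_{L^1(U)}\Big)
\]
uniformly. Here I expect the delicate point: the stretch $h^{(1+\b)/2}$ must not be beaten by the curvature correction $h$ from the fibre metrics. I would handle this by defining $\eta_h$ relative to the fibre's reference embedding, using (\ref{2d}) for the surface $x+th\n$ with constants uniform in $t$; this uniformity follows by perturbation, since asymptotic coordinates and the negative-curvature hypothesis are stable.

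With these bounds, weak compactness in $L^2$ and in $(H^1)'$ yields the limits. The $L^1$ bound gives weak$^*$ compactness in $\M$. The independence of the limits from $t$ follows because $\partial_t\eta_h=h\,\partial_n y_h\circ\pi_h$, whose $L^2$ norm is $O(h\cdot h^{(\b-4/3)/2})$, which is $o(h^{\b/2})$. Distributional identification of the limits gives consistency: the $\M$-limit and the $[H^1]'$-limit both equal $\sym DW+w\Pi$.

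\textbf{Lower bound.} I would write $\nabla y_h+I=R_h(I+h^{\b/2}G_h)$ and use the linearization
\[
\W(I+F)\ge \tfrac12\Q_3(F)-\omega(|F|)|F|^2,
\]
truncating on the set where $|G_h|\le h^{-\b/4}$. The tangential part of $\sym G_h$ converges weakly to $\sym DW+w\Pi+\mu$. The defect $\mu$ is a nonnegative measure in $T^2_+S$ when $\b<8/3$: it comes from $\frac12(\nabla\eta_h)^T\nabla\eta_h$, which is positive semidefinite and survives only at that scaling. It is negligible when $\b>8/3$, because then $h^{-\b/2}\|\nabla\eta_h\|^2_{L^2}\le h^{\b/2-4/3}\to0$ by Theorem \ref{thm3d}; this is precisely where the threshold $8/3$ enters. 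Minimizing pointwise in the normal components gives $\Q_2$, and lower semicontinuity of the convex functional $\int\Q_2$ with respect to weak$^*$ convergence, via Reshetnyak-type arguments, yields $\liminf h^{-\b}E_h\ge I_\b$.

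\textbf{Upper bound.} Both $I_\b$ and the convergences are continuous under strong approximation, so by density it suffices to treat smooth $(W,w)$ compactly supported in the interior, together with smooth $\mu\ge0$ when $\b<2$, and then pass to a diagonal sequence. For $\b>8/3$ the recovery sequence is
\[
y_h=h^{\b/2}(W+w\n)+h^{\b/2}t h\,d(x),
\]
with $d$ the optimal normal correction in $\Q_2$. The quadratic term is $O(h^\b)$ and is negligible because $\b>0$.

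For $\b<2$ the positive measure $\mu$ must be generated. I would write $\mu=\sum_i a_i^2\,e_i\otimes e_i$ along the two asymptotic directions, which span $T^2_+S$ as sums of squares. I would then add high-frequency normal wrinkles
\[
h^{\b/4}\varepsilon_h\,a_i\sin(x_i/\varepsilon_h)\,\n,
\]
whose quadratic term averages to $\frac12 h^{\b/2}a_i^2e_i\otimes e_i$. Because $\Pi(\a_{x_i},\a_{x_i})=0$, these wrinkles produce no linear membrane term along $e_i$; this is the role of Assumption (M)/(R). The bending cost is of order $h^{2}h^{\b/2}\varepsilon_h^{-2}$, which is $o(h^\b)$ for a suitable $\varepsilon_h\to0$ exactly when $\b<2$. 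The main obstacle is this wrinkling construction together with the clamped boundary: it requires cut-offs that respect the asymptotic coordinates, and the cross terms must be checked to vanish weakly.
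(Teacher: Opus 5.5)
Your compactness and lower-bound steps follow the paper's route: (\ref{2d}) is applied fibrewise, and Theorem \ref{thm3d} makes the quadratic part of the strain negligible exactly for $\beta>8/3$ and otherwise yields a nonnegative defect measure. The recovery sequence for $\beta<2$, however, has a genuine gap.

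First, nonnegative combinations of $dx_1\ot dx_1$ and $dx_2\ot dx_2$ (squares along the asymptotic directions) form only a two-dimensional subcone of the three-dimensional cone $T^2_+S$. For instance, $\ell\ot\ell$ with $\ell$ not proportional to $dx_1$ or $dx_2$ lies on an extreme ray of $T^2_+S$ and is not in that subcone. The optimal $\mu$ in $I_\beta$ is in general not of your form, so your construction only bounds $\limsup h^{-\beta}E_h(y_h)$ by an infimum over a smaller class, which can be strictly larger than $I_\beta(W,w)$.

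Second, it is not enough for the oscillating and cross terms to vanish weakly. To get $h^{-\beta}E_h(y_h)\to\frac12\int_S\Q_2(\sym DW+w\Pi+\mu)$ you need $\frac12h^{-\beta/2}(F_h^TF_h-I)$ to converge strongly in $L^2$; otherwise, by strict convexity of $\Q_2$, the surviving oscillations add energy. So the oscillating part of $\frac12D\varphi\ot D\varphi$ must be cancelled pointwise by an in-plane displacement. For two superposed wrinkle families, the cross term $a_1a_2\cos(x_1/\varepsilon_h)\cos(x_2/\varepsilon_h)(dx_1\ot dx_2+dx_2\ot dx_1)$ is not a symmetric gradient to leading order: at wavevector $k=(1,\pm1)$ a symmetric gradient vanishes on $(k^\perp,k^\perp)$, and this term does not. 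Your proposal gives no mechanism to cancel it.

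The paper avoids both problems with a laminate:
\begin{itemize}
\item it writes $\mu=\lambda_1a_1\ot a_1+\lambda_2a_2\ot a_2$ in the eigenbasis of $\mu$ itself;
\item it takes $\varphi_\delta$ with $D\varphi_\delta=\pm2\lambda_2^{1/2}a_2$ on the strips $\Om^+$ and $D\varphi_\delta=\pm2\lambda_1^{1/2}a_1$ on $\Om^-$, continuous across the interfaces;
\item it takes $\psi_\delta=\zeta_\delta(b_1x_1+b_2x_2)(\lambda_1^{1/2}a_1-\lambda_2^{1/2}a_2)$, so that $\sym D\psi_\delta=\pm(\lambda_1a_1\ot a_1-\lambda_2a_2\ot a_2)$ on $\Om^\pm$.
\end{itemize}
Then $\sym D\psi_\delta+\frac12D\varphi_\delta\ot D\varphi_\delta=\mu+O(|\ln h|^{-1})$ almost everywhere, and Lemma \ref{approx} mollifies $\varphi_\delta$ to control the bending term.

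Two further corrections. First, your claim that $\Pi(\partial_i,\partial_i)=0$ removes the linear membrane term is wrong. The mixed entry $\Pi(\partial_1,\partial_2)$ does not vanish since $\kappa<0$, so your wrinkles create a membrane strain of order $h^{\beta/4}\varepsilon_h$. In the paper this term is made negligible by the amplitude $\delta=h^{\beta/4}|\ln h|^{-1}=o(h^{\beta/4})$. The threshold $\beta<2$ comes from combining that with the bending requirement $h^{1-\beta/2}|\ln h|^3\to0$. Your bending count alone would allow any $\beta<4$, so your claim that this works exactly when $\beta<2$ is asserted rather than derived. Also, the asymptotic structure plays no role in the paper's recovery sequence beyond providing a global chart.

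Second, in the compactness step the fibre error is not an additive $Ch$. With $u_h(x)=x+\eta_h(x,t)$ on $S$ itself, $u_{h*}\tau=F_h\tau+th(\nabla y_h\circ\pi_h)\nabla\n\,\tau$, so the error is $Ch|\nabla y_h|$. The same Theorem \ref{thm3d} estimate you already use for $\partial_t\eta_h$ gives $h^2\|\nabla y_h\circ\pi_h\|^2_{L^2(U)}\le Ch^{2/3+\beta}=o(h^\beta)$. Hence (\ref{2d}) applies directly on $S$. Your detour through the parallel surfaces $x+th\n$ is unnecessary, and as written it is unproven: these surfaces are neither minimal nor ruled, and $\a$ is no longer asymptotic for them. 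You would have to re-establish the hypotheses of Theorem \ref{A} uniformly in $t$.
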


\section{Proof of Rigidity Theorems}
Let $TS$ and $T^2S$ be collection of all the smooth tangent vector fields and 2nd-order tensor fields on $S$, respectively. For $A\in T^2S$, we define its divergence $\div A$ as follow:
\[
(\div A)(X)
=\div (i_XA)-\<A,DX\>\quad \mbox{for all}\quad X\in TS,
\]
where $i_XA\in TS$, $DX\in T^2S$ are defined as 
\[
\<i_XA,Y\>=A(X,Y)\quad \mbox{for all}\quad Y\in TS,
\]
\[
DX(Y,Z)=\<D_ZX,Y\>\quad \mbox{for all}\quad Y,Z\in TS,
\]
and $\div (i_XA)\in C^\infty(S)$ is the divergence of $i_XA\in TS$ on S.

Note that for all $f\in C^\infty(S)$, $X\in TS$, we have 
\beq
(\div A)(fX)
=&&\div (i_{fX}A)-\<A,D(fX)\>
\nonumber\\
=&&\div (fi_{X}A)-\<A,fDX+X\ot Df\>
\nonumber\\
=&&f\div (i_{X}A)+\<Df,i_{X}A\>
-f\<A,DX\>
-A(X,Df)
\nonumber\\
=&&(f\div A)(X).
\nonumber
\eeq
Hence, $\div A$ is a first-order tensor field on $S$, which we may regard as a tangent vector field on $S$ by the canonical isometry between $T_xS$ and its duel $T^*_xS$, $x\in S$, i.e., we have 
\beq
\<\div A,X\>
=\div (i_XA)-\<A,DX\>\quad \mbox{for all}\quad X\in TS.
\label{defdivA}
\eeq
One can check that the following formula holds 
\beq
\div (X\ot Y)=D_YX+(\div Y)X\quad \mbox{for all}\quad X,Y\in TS.
\label{f2}
\eeq

The main results of the present section are the following theorems. 
\begin{thm}\label{A}
(\ref{2d}) holds for $S=\a(\om)$ provided that 

\uppercase\expandafter{\romannumeral1}. $\a:\om\to M$ is an  asymptotic coordinate system.

\uppercase\expandafter{\romannumeral2}. $S$ has negative Gaussian curvature.

\uppercase\expandafter{\romannumeral3}. There exists an $A\in T^2S$ such that $A^T=A>0$, $\<A,\Pi\>=0$ and $\div A=0$.

\end{thm}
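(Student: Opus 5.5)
The plan is to test the linearized strain against the tensor $A$ from hypothesis III. This gives an energy bound on $\nabla y$, and from it an $L^1$ bound on $\sym DW+w\Pi$. The bounds on $W$ and $w$ then come from duality, building test tensors by integrating along the asymptotic lines of $\a$.

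Write $\varepsilon=(u_*^Tu_*)^{\frac12}-\id$ and $e=\sym DW+w\Pi$. For $y=W+w\n$, a direct computation gives $(\nabla y)_{\tan}^{\sym}=\sym DW+w\Pi$ (the normal component of $y$ contributes $w\Pi$). Expanding the induced metric,
\[
(\id+\varepsilon)^2=u_*^Tu_*=\id+2e+(\nabla y)^T\nabla y ,
\]
so $e=\varepsilon+\frac12\varepsilon^2-\frac12(\nabla y)^T\nabla y$, where the last term is the tangential form $(\tau,\mu)\mapsto\<\nabla_\tau y,\nabla_\mu y\>$.

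\textbf{Step 1: energy bound.} Pair $e$ with $A$. Since $\<A,\Pi\>=0$, $\div A=0$ and $W|_{\partial S}=0$, the definition \eqref{defdivA} and the divergence theorem give
\[
\int_S\<A,e\>=\int_S\<A,DW\>=-\int_S\<\div A,W\>=0 .
\]
Hence
\[
\frac12\int_S A(\nabla y,\nabla y)=\int_S\<A,\varepsilon+\tfrac12\varepsilon^2\>.
\]
Because $A^T=A>0$ on the compact set $S$, this yields $\|\nabla y\|_{L^2}^2\le C(\|\varepsilon\|_{L^1}+\|\varepsilon\|_{L^2}^2)$.

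\textbf{Step 2: the $L^1$ bound on $e$.} If $\|\varepsilon\|_{L^2}\le1$, the identity for $e$ and Step 1 give
\[
\|e\|_{L^1}\le \|\varepsilon\|_{L^1}+\|\varepsilon\|^2_{L^2}+\|\nabla y\|^2_{L^2}\le C\|\varepsilon\|_{L^2}.
\]
If $\|\varepsilon\|_{L^2}>1$, I would use $|\varepsilon|\ge|\nabla_{\tan}u|-C$ (since $|(u_*^Tu_*)^{1/2}|=|\nabla_{\tan}u|$) to get $\|\nabla y\|_{L^2}\le C\|\varepsilon\|_{L^2}$ directly. All three terms in \eqref{2d} are then trivially bounded by $C\|\nabla y\|_{L^2}$ through Poincaré on $H^1_0$ (with $\|e\|_{L^1}\le C\|\nabla y\|_{L^2}$ as well). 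So from now on I assume $\|\varepsilon\|_{L^2}\le1$.

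\textbf{Step 3: duality for $w$ and $W$.} For $\psi\in H^1(S)$ I would construct a symmetric $B$ with
\[
\div B=0,\qquad \<B,\Pi\>=\psi,\qquad \|B\|_{L^\infty}\le C\|\psi\|_{H^1}.
\]
Then
\[
\int_S w\psi=\int_S\<B,e\>\le C\|\psi\|_{H^1}\|e\|_{L^1},
\]
which gives the $(H^1)'$ bound on $w$. Similarly, for tangent $\phi\in L^2$ I would find symmetric $B$ with $\<B,\Pi\>=0$ and $\div B=-\phi$; then $\int_S\<W,\phi\>=\int_S\<B,e\>$. The construction uses hypothesis I: in asymptotic coordinates $\Pi$ has only off-diagonal entries, so $\<B,\Pi\>=\psi$ fixes the mixed component $B^{12}$. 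The equation $\div B=$ given then becomes two transport equations, $\partial_1B^{11}=\cdots$ and $\partial_2B^{22}=\cdots$, along the two families of asymptotic lines. Hypothesis II is what makes these real, non-degenerate characteristic directions. I would solve each by integration from one side of $\om$, using $A$ (which solves the homogeneous problem and is $>0$) to adjust the data if necessary.

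\textbf{Main obstacle.} The step I expect to be hardest is the $L^\infty$ control of $B$ in the $W$-estimate. Integrating along characteristics gives $B^{11}\in L^\infty_{x_1}L^2_{x_2}$ from $\phi\in L^2$, not $L^\infty$. My first attempt would be to exploit the extra regularity of $W$ in the transverse direction: $\|\partial W\|_{L^2}\le C\|\varepsilon\|^{1/2}$ from Step 1. This would let me estimate $\int\<B,e\>$ by pairing the $L^\infty_{x_1}$ part of $B$ with the $L^1_{x_1}$ part of $e$, and treating the $L^2_{x_2}$ factor through the ODE structure of $e_{ii}=\partial_iW_i+(\text{lower order})$. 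The lower-order Christoffel terms would then be absorbed by a Gronwall argument along each asymptotic line.
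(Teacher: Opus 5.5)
Your Steps 1 and 2 are correct and coincide with the paper's (\ref{E}) and (\ref{L1}). The gap is Step 3. From there on you use only $\|e\|_{L^1}\le C\|\varepsilon\|_{L^2}$ (plus the $H^1$ bound of Step 1), and no choice of test tensors can turn this into the $L^2$ and $(H^1)'$ bounds, because the underlying linear estimates are false.

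For $w$, the constraint $\<B,\Pi\>=\psi$ pins $B^{12}=\psi/(2\Pi(\partial_1,\partial_2))$ in asymptotic coordinates. So $\|B\|_{L^\infty}\le C\|\psi\|_{H^1}$ is impossible, since $H^1\not\subset L^\infty$ in two dimensions. Concretely, $W=0$ with $w\in H^1_0$ a unit-mass bump concentrating at a point keeps $\|e\|_{L^1}=\|w\Pi\|_{L^1}$ bounded while $\|w\|_{(H^1)'}\to\infty$.

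For $W$, take covariant components $W_1=g(x_1)h(x_2)$, $W_2=0$, and choose $w$ to cancel the mixed component of $\sym DW$ (possible since $\Pi(\partial_1,\partial_2)\ne0$). Then $e$ is diagonal with entries $O(|g'h|+|gh|)$, so $\|e\|_{L^1}\lesssim\|h\|_{L^1}$ while $\|W\|_{L^2}\gtrsim\|h\|_{L^2}$, and the ratio blows up as $h$ concentrates.

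Your proposed repair does not close this. Gronwall along asymptotic lines with $e\in L^1$ gives exactly $\|W\|_{L^1}\le C\|e\|_{L^1}$ (the paper's Lemma \ref{WL1}), not an $L^2$ bound. The bound $\|\nabla y\|_{L^2}^2\le C\|\varepsilon\|_{L^2}$ has the wrong homogeneity: $a$ times the example above, with $h$ concentrated at scale $a^{1/5}$, satisfies $\|e\|_{L^1}\lesssim a$ and $\|\nabla y\|^2_{L^2}\lesssim a$, yet $\|W\|_{L^2}\sim a^{9/10}\gg a$.

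What is missing is a pointwise, one-sided bound that keeps the $L^2$ integrability of $\varepsilon$. Your identity $e=\varepsilon+\tfrac12\varepsilon^2-\tfrac12(\nabla y)^T\nabla y$ already shows that the quadratic term has a sign. Used naively, though, it leaves the remainder $\tfrac12|\varepsilon|^2$, which is only in $L^1$. Lemma \ref{linear-non} removes it. Write $u_*^Tu_*=(\id+P)^T(\id+P)+c\otimes c$ with $P=DW+w\nabla\n$ and $c=Dw-\nabla_W\n$. Then for unit $X$ one has $\<(\id+P)X,X\>\le|(\id+P)X|\le|(\id+\varepsilon)X|$, hence $\<eX,X\>\le|\varepsilon||X|^2$ pointwise, which is (\ref{key}).

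In asymptotic coordinates this gives $\partial_1(e^{\varphi_1}W_1)\le C(|\varepsilon|+|W_2|)$. Integrate from $0$ to $x_1$ and from $x_1$ to $1$; since $W_1$ vanishes at both ends of every asymptotic line, the one-sided bound becomes $|W_1(x_1,x_2)|\le C\int_0^1(|\varepsilon|+|W_2|)(s_1,x_2)\,ds_1$. Together with the analogous bound for $W_2$ and Lemma \ref{WL1}, this yields $\|W\|_{L^2}\le C\|\varepsilon\|_{L^2}$.

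The $w$-estimate then needs no $L^\infty$ test tensors. Choose $X_1,X_2$ with $\Pi(X_i,X_i)=(-1)^i$ (this is where $\ka<0$ enters) and test with $\varphi_+X_i\otimes X_i$. The nonnegative weight $\varphi_+$ pairs with the one-sided bound in $L^2\times L^2$. The $DW$ term is integrated by parts against $W\in L^2$. The choices $i=1,2$ then bound $\int_S\varphi_+w$ from both sides by $C\|\varepsilon\|_{L^2}\|\varphi\|_{H^1}$, and likewise for $\varphi_-$.

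So the order is essential: the $(H^1)'$ bound on $w$ rests on the $L^2$ bound on $W$, which rests on the sign structure of the nonlinear strain.
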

\begin{thm}\label{A3}
If the conditions in Theorem \ref{A} hold, 
then (\ref{3d}) holds for $S_h$.

\end{thm}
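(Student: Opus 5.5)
Under the hypotheses of Theorem \ref{A}, the goal is to prove (\ref{3d}): $\|\nabla y\|^2_{L^2(S_h)}\le Ch^{-4/3}\|\dist(\nabla y+I,SO(3))\|^2_{L^2(S_h)}$ for $y$ clamped on $(\partial S)_h$. The plan is to combine the Friesecke--James--M\"uller geometric rigidity on cells of size $h$ with the two‑dimensional estimate (\ref{2d}) of Theorem \ref{A}, and to use an interpolation argument that produces the exponent $h^{-4/3}$. Throughout write $\epsilon^2=\|\dist(\nabla y+I,SO(3))\|^2_{L^2(S_h)}$.

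\textbf{Step 1: Approximating rotation field.} Cover $S$ by patches of diameter $\sim h$ and apply FJM on each cylinder $\pi^{-1}(\text{patch})$. Mollifying in the standard way gives a smooth $R:S\to SO(3)$ with
\[
\|\nabla y+I-R\circ\pi\|^2_{L^2(S_h)}\le C\epsilon^2,
\qquad
\|DR\|^2_{L^2(S)}\le Ch^{-3}\epsilon^2 .
\]
Near $\partial S$ we use the clamping $y=0$ on $(\partial S)_h$, extending $y$ by zero across the lateral boundary, so that $R$ is close to $I$ there. The same FJM argument then shows that the averaged map $\bar u(x)=x+\frac1h\int y(x+t\n)\,dt$ has tangential differential close to $R|_{T_xS}$, with error $h^{-1/2}\epsilon$ in $L^2(S)$.

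\textbf{Step 2: Apply Theorem \ref{A} to the mid‑surface.} Set $\bar y=\bar u-\id\in H^1_0(S,\mathbb{R}^3)$. Since $\dist(R,SO(3))=0$, the induced metric satisfies
\[
\|(\bar u_*^T\bar u_*)^{1/2}-\id\|_{L^2(S)}\le Ch^{-1/2}\epsilon .
\]
Then (\ref{2d}) bounds $\|\bar W\|_{L^2}$ and $\|\bar w\|_{(H^1)'}$, hence controls the low‑frequency part of $\bar y$, by $Ch^{-1/2}\epsilon$.

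\textbf{Step 3: Bound $R-I$ by interpolation.} The key quantity is $\|R-I\|_{L^2(S)}$. It is controlled in a negative norm because $R-I\approx\nabla\bar y$ up to $h^{-1/2}\epsilon$, and $\bar y$ is controlled in $L^2$ or $(H^1)'$ by Step 2. It is controlled in $H^1$ by Step 1, $\|DR\|\le h^{-3/2}\epsilon$. A plain $H^{-1}$--$H^{1}$ interpolation gives only $h^{-1}\epsilon$; the normal component needs more care. I would use the linearization: $\sym DW+w\Pi$ is small, and hyperbolicity, through the divergence‑free positive tensor $A$ with $\<A,\Pi\>=0$, makes the equation for $w$ a wave‑type equation along the asymptotic lines. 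The expected outcome is that $w$ is controlled in an $H^{-1}$‑type norm and the skew part of $R-I$ in $H^{-1/2}$‑type norms, and interpolating against $\|DR\|_{L^2}$ should yield $\|R-I\|_{L^2}\le Ch^{-2/3}\epsilon$. The exponent $2/3$ is what one gets from balancing $h^{-1/2}\epsilon$ at $H^{-1/4}$ against $h^{-3/2}\epsilon$ at $H^1$: $\tfrac45\cdot\tfrac12+\tfrac15\cdot\tfrac32=\tfrac7{10}$ is not quite $\tfrac23$, so the precise negative exponent must be chosen to match. The rigorous route I would try first is a duality/integration‑by‑parts with test fields $A$ and the $L^1$ control of $\sym DW+w\Pi$ from (\ref{2d}), combined with the nonlinear term $(R-I)^T(R-I)\approx -2\,\sym(R-I)$.

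\textbf{Step 4: Conclude.} Finally,
\[
\|\nabla y\|^2_{L^2(S_h)}\le 2\|\nabla y+I-R\|^2+2h\|R-I\|^2_{L^2(S)}\le C\epsilon^2+Ch\cdot h^{-4/3}\epsilon^2 .
\]
This is slightly better than needed, so a cruder bound $\|R-I\|_{L^2(S)}\le Ch^{-7/6}\epsilon$ suffices for (\ref{3d}), and the interpolation in Step 3 becomes easier to realize: for instance an $L^2$‑type bound $h^{-1/2}\epsilon$ on a negative‑order quantity interpolated against $h^{-3/2}\epsilon$ at $H^1$. Obtaining a genuine negative‑norm control of $R-I$ from (\ref{2d}), which only controls $W$ in $L^2$ and $w$ in $(H^1)'$, is the main obstacle in this plan.
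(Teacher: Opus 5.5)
\textbf{Gap 1: Step 3 is not carried out.} Steps 1, 2 and 4 are standard. The content of the theorem is in Step 3, and you leave it open:
\begin{itemize}
\item The target is first misstated as $h^{-2/3}\epsilon$. What (\ref{3d}) needs, as you only notice in Step 4, is $\|R-I\|_{L^2(S)}\le Ch^{-7/6}\epsilon$.
\item The mechanisms you offer (a wave equation along asymptotic lines, $H^{-1/4}$ or $H^{-1/2}$ control of a skew part) are unsubstantiated, and the exponent bookkeeping does not close.
\item You end by calling the negative-norm control the main obstacle.
\end{itemize}
The missing observation is elementary. Your Steps 1--2 give $(R-I)|_{TS}=D\bar y+E$ with $\|E\|_{L^2}\le Ch^{-1/2}\epsilon$; this holds for the smooth field as well as for a piecewise constant one. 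The tangential component of $D\bar y$ is $D\bar W+\bar w\Pi$, and its normal component is $D\bar w-\nabla_{\bar W}\n$. By (\ref{2d}), $\|\bar W\|_{L^2}+\|\bar w\|_{(H^1(S))'}\le Ch^{-1/2}\epsilon$, and $\bar w=0$ on $\partial S$. Integrating by parts against compactly supported test fields then bounds the tangential component in $H^{-1}$ and the normal one in $H^{-2}$, both by $Ch^{-1/2}\epsilon$. The $L^2$ error $E$ is harmless in these norms. Since $R=I$ near $\partial\tilde S$, the interpolation $\|f\|_{L^2}\le C\|f\|_{H^{-2}}^{1/3}\|Df\|_{L^2}^{2/3}$ combined with $\|DR\|_{L^2}\le Ch^{-3/2}\epsilon$ gives $\|(R-I)|_{TS}\|_{L^2}\le Ch^{-1/6-1}\epsilon=Ch^{-7/6}\epsilon$.

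So your remark that $H^{-1}$--$H^1$ interpolation ``gives only $h^{-1}\epsilon$'' is backwards. A bound of $h^{-1}$ would be more than enough. The loss to $h^{-7/6}$, and hence to $h^{-4/3}$, comes precisely from the normal component being controlled only in $H^{-2}$.

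The paper does the same arithmetic on a different object. It Hodge-decomposes $\pi_*(R^T-I)e_i=QD\xi_i+D\eta_i$ and shows $\|D\xi_i\|_{L^2}\le Ch^{-1/2}\varepsilon$. It then applies Theorem \ref{A} to the smooth displacement $\sum_i\eta_ie_i$, whose $H^2$ norm is at most $Ch^{-3/2}\varepsilon$, and interpolates $\|Dv\|_{L^2}\le\|v\|_{H^{-1}}^{1/3}\|v\|_{H^2}^{2/3}$. Your raw average $\bar y$ has no $H^2$ bound, which is why you must interpolate on $R-I$ rather than on $\bar y$.

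\textbf{Gap 2: the normal column.} Your Step 4 also needs $(R-I)\n$, which $D\bar y$ does not see at all. In the paper this is the whole of Step 2 (Lemma \ref{f(f+2)}, Lemma \ref{eigenvalue} and the $L^\infty$ bootstrap), because its $R$ is a smooth matrix field that is not $SO(3)$-valued. Your Step 1 asserts a smooth $SO(3)$-valued $R$ with both bounds, and the FJM construction does not provide that. It gives a piecewise constant $R_0:S\to SO(3)$ and a smooth $\tilde R$ with $\|\tilde R-R_0\|_{L^2(S)}\le Ch^{-1/2}\epsilon$ and $\|D\tilde R\|_{L^2(S)}\le Ch^{-3/2}\epsilon$.

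Use $R_0$ for the averaged map, for Theorem \ref{A} and in Step 4, and use $\tilde R$ only in the interpolation. Then for an oriented frame $\{E_1,E_2,\n\}$ you have $R_0\n=R_0E_1\times R_0E_2$, which gives $|(R_0-I)\n|\le C|(R_0-I)|_{TS}|$ pointwise. The normal column therefore comes for free, but this has to be stated.

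With both additions Step 4 closes: $\|\nabla y\|^2_{L^2(S_h)}\le C\epsilon^2+Ch\|R_0-I\|^2_{L^2(S)}\le C\epsilon^2+Ch^{-4/3}\epsilon^2$. This route also avoids the paper's Step 2 entirely.
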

Theorem \ref{thm2d} and Theorem \ref{thm3d} immediately follow Theorem \ref{A} and Theorem \ref{A3}. 
\\
{\bf Proof of Theorem \ref{thm2d} and Theorem \ref{thm3d}: }

If {\bf Assumption (M)} holds, we directly take $A=\id$ by noting that $\<\id,\Pi\>=\tr\Pi=0$.
 
 Suppose that {\bf Assumption (R)} holds. By Lemma \ref{LemmaA2}, 
there exist $b\in C^\infty(S)$, $y_{\min}$, $y_{\max}\in C^\infty([0,1])$ and an asymptotic coordinate system $\a:\om\to S$ such that $S=\a(\om)$, 
\beq
\om=\bigcup_{x\in[0,1]}\{x\}\times[y_{\min}(x),y_{\max}(x)],\label{yminymax1}
\eeq
 and 
\beq
\a_x=\b_{s}-\frac{\Pi(\b_{s},\b_{s})}{2\Pi(\b_{s},\b_{t})}\b_{t},\quad \a_y=b\b_{t}.\label{ax1ax21}
\eeq

It suffices to prove the existence of $A$. 
Consider $A=u\a_x\ot u_x+v\a_y\ot \a_y$. 
Note that 
\[
\nabla_{\a_{y}}\a_{y}
=b\nabla_{\b_t}(b\b_t)
=b_t\a_{y}
=\Ga_{yy}^y\a_{y}
\]
by (\ref{ax1ax21}). 
Then (\ref{f2}) implies that 
\beq
\div A
=&&D_{\a_x}(u\a_x)
+u(\div\a_x)\a_x
+D_{\a_y}(v\a_y)
+v(\div\a_y)\a_y
\nonumber\\
=&&
[\a_{x}u+u(\Ga_{xx}^x+\div\a_{x})]\a_{x}
+[\a_{y}v
+v(\Ga_{yy}^y+\div\a_{y})+u\Ga_{xx}^y]\a_{y}
.\nonumber
\eeq
By (\ref{ax1ax21}) and Lemma \ref{LemmaA1}, there exist $u_1\in C^\infty(S)$ satisfying 
\[
\a_xu_1=-(\Ga_{xx}^x+\div \a_x).
\]
By (\ref{yminymax1}), there exist $v_1,w\in C^\infty(S)$ satisfying 
\[
\a_{y}v_1
=-(\Ga_{yy}^y+\div\a_{y})
\]
and 
\[
\a_{y}w
+w(\Ga_{yy}^y+\div\a_{y})+e^{u_1}\Ga_{xx}^y=0.
\]
Then $u=e^{u_1}$ and $v=ke^{v_1}+w$ satisfy the conditions of $A$ for sufficiently large $k\in\mathbb{N}^+$.

\qed

\subsection{Proof of Theorem \ref{A}}
In the present subsection, we always assume that the conditions in Theorem \ref{A} holds. 
There exists $a,b\in\mathbb{R}$ such that $\om\subset[a,b]^2$ since $\om\subset\mathbb{R}^2$ is compact. For any $y\in H^1_0(S,\mathbb{R}^3)$, we extend $y\circ \a$ by zero outside $\om$, and arbitrarily extend $\n\circ\a$ (and any other parameters needed) smoothly from $\om$ to $[a,b]^2$. 
Then Theorem \ref{A} and Theorem \ref{A3} holds if and only if (\ref{2d}) and (\ref{3d}) hold for $[a,b]^2$ under extended parameters. In the present subsection, without loss of generality, by rescaling, we may always suppose that 
\[\om=[0,1]^2.\]
Denote 
\[\partial_i=\a_{x_i},\quad i=1,2.\]

For any $y=W+w\n\in H^1_0(S,\mathbb{R}^3)$, 
let $u(x)=x+y(x)$, $x\in S$. 
Then $u_*=\id+y_*$ and 
\beq
u_*^Tu_*-\id
=&&(DW+w\nabla\n)^T(DW+w\nabla\n)+(Dw-\nabla_W\n)\ot (Dw-\nabla_W\n)
\nonumber\\
&&\quad+2(\sym DW+w\nabla\n).
\label{uu-id}
\eeq
Let $A\in T^2S$ be given satisfying the conditions in Theorem \ref{A}. Then (\ref{defdivA}) implies 
\beq
\<u_*^Tu_*-\id,A\>
=&&|(DW+w\nabla\n)A^\frac12|^2+|A^\frac12(Dw-\nabla_W\n)|^2
+2\div i_WA.
\nonumber
\eeq
Let 
\[\E=(u_*^Tu_*)^\frac12-\id.
\]
Then 
\[u_*^Tu_*-\id
=(\E+\id)^2-\id=\E^2+2\E.
\]
Thus, for any $y=W+w\n\in H^1_0(S,\mathbb{R}^3)$, we have 
\beq
\|DW+w\nabla\n\|^2_{L^2(S)}
+\|Dw-\nabla_W\n\|^2_{L^2(S)}
\le C(\|\E\|^2_{L^2(S)}+\|\E\|_{L^2(S)}).
\label{E}
\eeq
Theorem \ref{A} immediately follows (\ref{E}), provided $\|\E\|_{L^2(S)}\ge1$. It suffices to consider the case that $\|\E\|_{L^2(S)}\le 1$. Then (\ref{uu-id}) and (\ref{E}) yield
\beq
&&\|\sym DW+w\nabla\n\|_{L^1(S)}\nonumber\\
\le 
&&\|DW+w\nabla\n\|^2_{L^2(S)}
+\|Dw-\nabla_W\n\|^2_{L^2(S)}
+C\|u_*^Tu_*-\id\|_{L^1(S)}
\nonumber\\
\le&& C(\|\E\|^2_{L^2(S)}+\|\E\|_{L^2(S)})
\nonumber\\
\le&& C\|\E\|_{L^2(S)}.\qquad
\label{L1}
\eeq

It is easy to derive the $L^2$-regularity of $W$ on plates (referring to the proof of Theorem 2.1 in \cite{Conti}) by (\ref{L1}) and the celebrated Korn-Sobolev inequality 
\[\|W\|_{L^2(\om)}\le C\|\sym DW\|_{L^1(\om)}\quad
\mbox{for all}\quad W\in H^1_0(\om,\mathbb{R}^2),
\]
where $\om\subset \mathbb{R}^2$ is a given bounded region in $\mathbb{R}^2$, which won't happen in the case that second fundamental form $\Pi=\nabla\n$ appears, while the $L^1$-regularity of $W$ still holds. 

Denote $W_i=\<W_i,\partial_i\>$, $i=1,2$. 
Fix $\varphi_i\in C^\infty(S)$, $i=1,2$, such that 
\[
\partial_i\varphi_i=-\Ga_{ii}^i.
\]
Then 
\beq
\partial_1(e^{\varphi_1}W_1)
=&&e^{\varphi_1}(W_1\partial_1\varphi_1
+\partial_1W_1)\nonumber\\
=&&e^{\varphi_1}(-\Ga_{11}^1W_1
+\<D_{\partial_1}W,\partial_1\>
+\<W,D_{\partial_1}\partial_1\>
)\nonumber\\
=&&e^{\varphi_1}(\<\sym DW+w\Pi,\partial_1\ot\partial_1\>
+\Ga_{11}^2W_2
).\label{W11}
\eeq
Similarly, we have 
\beq
\partial_2(e^{\varphi_2}W_2)
=&&e^{\varphi_2}(\<\sym DW+w\Pi,\partial_2\ot\partial_2\>
+\Ga_{22}^1W_1
).\label{W22}
\eeq
\begin{lem}\label{WL1}There exists a constant $C>0$ such that for any $y=W+w\n\in H^1_0(S,\mathbb{R}^3)$, there holds 
\beq
\|W\|_{L^1(S)}\le C\|\sym DW+w\nabla\n\|_{L^1(S)}.\nonumber
\eeq

\end{lem}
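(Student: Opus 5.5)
The plan is to treat (\ref{W11})--(\ref{W22}) as a coupled system of transport equations, one along each asymptotic direction, with the components $W_1,W_2$ coupled only through the zeroth-order terms $\Ga_{11}^2W_2$ and $\Ga_{22}^1W_1$. Because $y\in H^1_0$ and $\om=[0,1]^2$ after the extension, $W$ vanishes on the boundary of the square. So we can integrate (\ref{W11}) in $x_1$ from $0$ and (\ref{W22}) in $x_2$ from $0$.

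Write $f=\sym DW+w\Pi$ and $f_{ii}=\<f,\partial_i\ot\partial_i\>$. Integrating (\ref{W11}) along the segment $\{x_2\}\times[0,x_1]$ gives
\[
|W_1(x_1,x_2)|\le C\int_0^{x_1}\big(|f_{11}|+|W_2|\big)(s,x_2)\,ds .
\]
Integrating (\ref{W22}) along $\{x_1\}\times[0,x_2]$ gives the analogous bound for $W_2$. If we simply integrate these over the square, we get
\[
\|W_1\|_{L^1}\le C\big(\|f\|_{L^1}+\|W_2\|_{L^1}\big),
\]
and the same with the roles of $W_1$ and $W_2$ swapped. This closes only if the constants are small, which they are not. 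So we need a weighted or Gronwall-type argument.

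The first thing I would try is an exponential weight in a single variable. Set $\Phi(x)=e^{-\lambda(x_1+x_2)}$ and consider the weighted quantities $\int\Phi|W_1|$ and $\int\Phi|W_2|$. From the pointwise bound,
\[
\int_{[0,1]^2}\Phi|W_1|\le C\int_{[0,1]^2}\big(|f_{11}|+|W_2|\big)(s,x_2)\Big(\int_s^1e^{-\lambda(x_1+x_2)}dx_1\Big)\,ds\,dx_2 .
\]
The inner integral is at most $\lambda^{-1}e^{-\lambda(s+x_2)}$, so
\[
\int\Phi|W_1|\le \frac C\lambda\Big(\|f\|_{L^1}+\int\Phi|W_2|\Big),
\]
and symmetrically for $W_2$. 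Adding the two inequalities and choosing $\lambda$ large enough that $C/\lambda<1/2$, we absorb the $W$ terms into the left-hand side. This yields $\int\Phi(|W_1|+|W_2|)\le C\|f\|_{L^1}$. Since $\Phi\ge e^{-2\lambda}$ on the square, this bounds $\|W_i\|_{L^1}$.

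To finish, we recover $W$ from its covariant components using the inverse metric $g^{ij}$, which is bounded because $\a$ is a smooth immersion. We also compare $L^1(\om)$ and $L^1(S)$ norms via the bounded area element. This gives $\|W\|_{L^1(S)}\le C\|\sym DW+w\nabla\n\|_{L^1(S)}$.

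Two technical points need checking, and this is where I expect the main obstacle. The first is justifying the pointwise integration for $H^1$ functions. I would either argue by density of smooth compactly supported $y$ (both sides are continuous in $H^1$), or use the fact that for a.e. $x_2$ the slice is absolutely continuous. The second is that the weights $e^{\varphi_i}$ and the Christoffel symbols, after the smooth extension to the square, remain bounded above and below; this holds because $\om$ is compact and the extension is smooth. The rest is routine. The key idea is that the coupling is of zeroth order while each equation is a first-order ODE along a coordinate line, so the exponential weight makes the coupling perturbatively small.
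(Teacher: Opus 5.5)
Your argument is correct and is essentially the paper's. Both integrate (\ref{W11})--(\ref{W22}) along coordinate lines from the zero boundary values to get the same pointwise bounds, and both then close the zeroth-order coupling through $\Ga_{11}^2W_2$ and $\Ga_{22}^1W_1$ with a Gronwall-type argument. The only difference is the closing step: the paper substitutes the $W_2$ bound into the $W_1$ bound and applies Gronwall to $x_1\mapsto\int_0^1|W_1|(x_1,x_2)\,dx_2$, whereas your weight $e^{-\lambda(x_1+x_2)}$ handles both components symmetrically at once. Your absorption step is legitimate because $W\in L^2$ makes the weighted integrals finite.
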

\begin{proof}
By (\ref{W11}), we have 
\beq
|W_1|(x_1,x_2)
\le&&
C|e^{\varphi_1}W_1|(x_1,x_2)
\le C\int_0^{x_1}|\partial_1(e^{\varphi_1}W_1)|(s_1,x_2)ds_1\nonumber\\
\le&&C\int_0^{x_1}\big(|\sym DW+w\Pi|
+|W_2|\big)(s_1,x_2)ds_1.
\label{W1x2}
\eeq
Analogously, 
\beq
|W_2|(x_1,x_2)
\le&&C\int_0^{x_2}
\big(|\sym DW+w\Pi|+|W_1|\big)(x_1,s_2)ds_2.
\label{W2x1}
\eeq
For all $x_1\in[0,1]$, denote 
\[f(x_1)=\int_0^1|W_1|(x_1,x_2)dx_2.
\]
(\ref{W1x2}) and (\ref{W2x1}) yield  
\beq
f(x_1)
\le &&C\int_0^1\int_0^{x_1}\big(|\sym DW+w\Pi|
+|W_2|\big)(s_1,x_2)ds_1dx_2
\nonumber\\
\le &&C\|\sym DW+w\Pi\|_{L^1(S)}
+C\int_0^1\int_0^{x_1}\int_0^{x_2}
\big(|\sym DW+w\Pi|+|W_1|\big)(s_1,s_2)ds_2ds_1dx_2
\nonumber\\
\le &&C\|\sym DW+w\Pi\|_{L^1(S)}
+C\int_0^1\int_0^{x_1}\int_0^1
|W_1|(s_1,s_2)ds_2ds_1dx_2
\nonumber\\
\le &&C\|\sym DW+w\Pi\|_{L^1(S)}
+C\int_0^{x_1}
f(s_1)ds_1.
\nonumber
\eeq
Gronwall's inequality implies 
\[
f(x_1)\le 
C\|\sym DW+w\Pi\|_{L^1(S)}\quad\mbox{for all}\quad
x_1\in[0,1].
\]
Thus, 
\[
\|W_1\|_{L^1(S)}\le 
C\|\sym DW+w\Pi\|_{L^1(S)}.
\]
We finish the proof by obtaining the same estimates for $\|W_2\|_{L^1(S)}$.

\end{proof}

By (\ref{WL1}) and (\ref{L1}), we obtain there holds for any $y=W+w\n\in H^1_0(S,\mathbb{R}^3)$ that 
\beq
\|W\|_{L^1(S)}\le C\|\E\|_{L^2(S)}.\label{WL1'}
\eeq
To obtain the optimal exponent of thickness in rigidity estimates (\ref{3d}) for hyperbolic shells, $L^2$-regularity of $W$ as shown in (\ref{2d}) is necessary, referring to the linear elasticity theory. The following lemma takes a vital role in the subsequent improvement.

\begin{lem}\label{linear-non}
For any $B,C\in \mathbb{R}^{2\times2}$ with $C^T=C\ge0$, there holds 
\[
\<Be,e\>
\le |e|^2\Big|\big[(B+I_2)^T(B+I_2)+C\big]^\frac12-I_2\Big|
\]
for any $e\in\mathbb{R}^2$.
\end{lem}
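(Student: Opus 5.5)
The plan is to reduce to unit vectors and compare $(B+I_2)e$ with $Pe$, where $P$ is the positive square root of the matrix under the square root. By homogeneity of degree two in $e$ on both sides, it suffices to treat $|e|=1$; the case $e=0$ is trivial. Set
\[
M=(B+I_2)^T(B+I_2)+C,\qquad P=M^{\frac12}.
\]
Then $M$ is symmetric and positive semidefinite, since $C^T=C\ge0$, so $P$ is well defined, symmetric and $P\ge0$. The norm on the right-hand side is the Frobenius norm used throughout the paper. I will prove the stronger bound with the operator norm $|P-I_2|_{op}$ and then use $|P-I_2|_{op}\le|P-I_2|$.

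The first step is to bound $|(B+I_2)e|$ by $|Pe|$ using positivity of $C$. Since $P$ is symmetric,
\[
|Pe|^2=\<P^2e,e\>=\<Me,e\>=|(B+I_2)e|^2+\<Ce,e\>\ge |(B+I_2)e|^2 .
\]
Hence $|(B+I_2)e|\le |Pe|$. The second step is to linearize $\<Be,e\>$ with Cauchy--Schwarz. Because $|e|=1$,
\[
\<Be,e\>=\<(B+I_2)e,e\>-1\le |(B+I_2)e|-1\le |Pe|-1 .
\]
The triangle inequality gives $|Pe|\le |e|+|(P-I_2)e|=1+|(P-I_2)e|$. Therefore
\[
\<Be,e\>\le |(P-I_2)e|\le |P-I_2|_{op}\le |P-I_2| .
\]
Rescaling by $|e|^2$ then yields the claim for arbitrary $e\in\mathbb{R}^2$.

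I expect no serious obstacle. The one point that needs care is that the inequality is one-sided: we only bound $\<Be,e\>$ from above. That is exactly why we may discard the positive term $\<Ce,e\>$ and use the crude bound $\<(B+I_2)e,e\>\le|(B+I_2)e|$. Both steps lose information only in the favorable direction.

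This is the property needed later. The nonnegative, "wrinkling-type" part $C$, which plays the role of $(Dw-\nabla_W\n)\ot(Dw-\nabla_W\n)$ in (\ref{uu-id}), does not spoil a linear upper bound of $\<(DW+w\nabla\n)e,e\>$ by $|\E|$ in asymptotic directions $e=\partial_i$. We can then feed this into (\ref{W11}) and (\ref{W22}) to upgrade the $L^1$ estimate to an $L^2$ one.
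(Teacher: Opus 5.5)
Your proof is correct and follows the paper's argument. For unit $e$, both use $\langle Me,e\rangle=|(B+I_2)e|^2+\langle Ce,e\rangle\ge|(B+I_2)e|^2$ to get $\langle (B+I_2)e,e\rangle\le|M^{1/2}e|$, then subtract $1=|e|$ and bound $|M^{1/2}e|-|e|$ by $|M^{1/2}-I_2|$. The only difference is cosmetic: you go explicitly through the operator norm before passing to the Frobenius norm.
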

\begin{proof}
Denote $D=B+I_2$. Suppose that $|e|=1$. Then 
\beq
\<De,e\>^2
\le&&|De|^2
\le \<(D^TD+C)e,e\>
=|(D^TD+C)^\frac12e|^2.
\nonumber
\eeq
We obtain 
\[
\<Be,e\>=\<De,e\>-1
\le |(D^TD+C)^\frac12e|-|e|
\le \big|(D^TD+C)^\frac12-I_2\big|.
\]
\end{proof}
Take $B=DW+w\nabla\n$ and $C=(Dw-\nabla_W\n)\ot (Dw-\nabla_W\n)$. Then 
\[(B+\id)^T(B+\id)+C=u_*^Tu_*.
\]
By Lemma \ref{linear-non}, for any $X\in TS$, we have
\beq
\<DW+w\Pi,X\ot X\>
\le |\E||X|^2.
\label{key}
\eeq

{\bf Proof of Theorem \ref{A}:}
By (\ref{W11}) and (\ref{key}), we have 
\beq
\partial_1(e^{\varphi_1}W_1)
=&&e^{\varphi_1}(\<DW+w\Pi,\partial_1\ot\partial_1\>
+\Ga_{11}^2W_2)
\le C(|\E|+|W_2|).
\nonumber
\eeq
Note that for any $(x_1,x_2)\in[0,1]^2$, there holds  
\[
e^{\varphi_1}W_1(x_1,x_2)
=\int_0^{x_1}[\partial_1(e^{\varphi_1}W_1)](s_1,x_2)ds_1,
\]
and 
\[
-e^{\varphi_1}W_1(x_1,x_2)
=\int_{x_1}^1[\partial_1(e^{\varphi_1}W_1)](s_1,x_2)ds_1.
\]
We obtain 
\beq
|W_1(x_1,x_2)|
\le C\int_0^1(|\E|+|W_2|)(s_1,x_2)ds_1,
\label{W1}
\eeq
and analogously, 
\beq
|W_2(x_1,x_2)|
\le C\int_0^1(|\E|+|W_1|)(x_1,s_2)ds_2.
\label{W2}
\eeq
Substituting (\ref{W2}) into (\ref{W1}), we obtain 
\beq
|W_1(x_1,x_2)|
\le C\int_0^1|\E|(s_1,x_2)ds_1
+C\int_0^1\int_0^1(|\E|+|W_1|)(s_1,s_2)ds_2ds_1.
\label{WL1'}
\eeq
(\ref{WL1'}) and Lemma \ref{WL1} yield 
\[
\|W_1\|_{L^2(S)}
\le C\|\E\|_{L^2(S)}
+C\|W_1\|_{L^1(S)}
\le C\|\E\|_{L^2(S)}.
\]
The same estimates also holds for $W_2$. Thus, 
\beq
\|W\|_{L^2(S)}
\le C\|\E\|_{L^2(S)}.
\label{WL2}
\eeq

Now we estimate norm $\|w\|_{(H^1(S))'}$. Fix $X_1,X_2\in TS$ such that 
\[\Pi(X_i,X_i)=(-1)^i,\quad i=1,2.\]
For any $\varphi\in C^1(S)$, let $\varphi_+=\max\{\varphi,0\}$, $\varphi_-=\max\{-\varphi,0\}$. Then $\varphi_+,\varphi_-\in H^1(S)$ and 
\[
D\varphi_+
=\chi_{\varphi>0}D\varphi,\quad 
D\varphi_-
=-\chi_{\varphi<0}D\varphi.
\]
For $i=1,2$, (\ref{key}) and (\ref{WL2}) imply
\beq
(-1)^i\int_S\varphi_+w
=&&\int_S\varphi_+w\Pi(X_i,X_i)
\nonumber\\
=&&\int_S\varphi_+
\<DW+w\Pi,X_i\ot X_i\>
-\<DW,\varphi_+X_i\ot X_i\>
\nonumber\\
\le &&\int_S\varphi_+
|X_i|^2|\E|
+\<W,\div(\varphi_+X_i\ot X_i)\>
\nonumber\\
\le &&C\|\E\|_{L^2(S)}\|\varphi\|_{H^1(S)}.
\nonumber
\eeq
Hence, 
\[\bigg|\int_S\varphi_+w\bigg|
\le C\|\E\|_{L^2(S)}\|\varphi\|_{H^1(S)}.
\]
Analogously, 
\[\bigg|\int_S\varphi_-w\bigg|
\le C\|\E\|_{L^2(S)}\|\varphi\|_{H^1(S)}.
\]
(\ref{2d}) follows. 

\qed

\subsection{Proof of Theorem \ref{A3}}
The component of nonlinear strain $(\nabla y)^T\nabla y+(\nabla y)^T+\nabla y$ acting on $\n\ot\n$ takes the form 
\[
w_t(w_t+2)+|W_t|^2,
\]
which is a hint to search for estimates of the following type. 

\begin{lem}\label{f(f+2)}
There exists constant $C>0$ such that for any $\tau\in[0,1]$, $f\in H^1_0([0,1]^2)$, there holds  
\beq
\|f\|^2_{L^2([0,1]^2)}
\le C\|f(f+2)\|_{L^1([0,1]^2)}
+C\|f(f+2)\|_{L^1([0,1]^2)}^\tau\|\nabla f\|^{2\tau}_{L^2([0,1]^2)}.
\label{wt}
\eeq

\end{lem}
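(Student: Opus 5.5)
The plan is to split $[0,1]^2$ according to the value of $f$. Away from the two zeros $f=0$ and $f=-2$ of $f(f+2)$, the quantity $f^2$ is pointwise controlled by $|f(f+2)|$. The only dangerous region is where $f$ sits near $-2$. There $f$ is bounded, so it suffices to bound the measure of that region. This is where the boundary condition $f\in H^1_0$ and the gradient enter, through a two-dimensional isoperimetric (Gagliardo--Nirenberg--Sobolev $W^{1,1}\hookrightarrow L^2$) inequality.

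\textbf{Step 1 (pointwise reduction).} On $\{f\ge -1\}$ we have $|f+2|\ge |f|$, so $f^2\le |f(f+2)|$. On $\{f\le -3\}$ we have $|f+2|\ge |f|/3$, so $f^2\le 3|f(f+2)|$. Set $E=\{-3<f<-1\}$. Since $f^2\le 9$ on $E$,
\[
\|f\|^2_{L^2([0,1]^2)}\le 3\|f(f+2)\|_{L^1([0,1]^2)}+9|E| .
\]
So it remains to show $|E|\le C\|f(f+2)\|_{L^1}^\tau\|\nabla f\|_{L^2}^{2\tau}$ for every $\tau\in[0,1]$. Because $|E|\le 1$ always, we have $|E|\le\min\{1,|E|\}$. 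Hence it is enough to prove the case $\tau=1$, namely $|E|\le C\|f(f+2)\|_{L^1}\|\nabla f\|^2_{L^2}$, and then use $\min\{1,X\}\le X^\tau$ for $X\ge0$ and $\tau\in[0,1]$.

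\textbf{Step 2 (transition layer and cutoff).} Let $\psi:\mathbb{R}\to[0,1]$ be Lipschitz with constant $2$, with $\psi=1$ on $(-\infty,-1]$, $\psi=0$ on $[-1/2,\infty)$, and linear in between. Set $\phi=\psi\circ f$. Since $f\in H^1_0$ and $\psi=0$ near $0$, we get $\phi\in W^{1,1}_0([0,1]^2)$ (approximate $f$ by $C_c^\infty$ functions if needed). Moreover $\nabla\phi$ is supported in the layer $T=\{-1<f<-1/2\}$. On $T$ one checks $|f(f+2)|\ge 3/4$, so $|T|\le \tfrac43\|f(f+2)\|_{L^1}$. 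By Cauchy--Schwarz,
\[
\|\nabla\phi\|_{L^1}\le 2\int_T|\nabla f|\le 2|T|^{1/2}\|\nabla f\|_{L^2}\le C\|f(f+2)\|_{L^1}^{1/2}\|\nabla f\|_{L^2}.
\]

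\textbf{Step 3 (isoperimetric step).} By the two-dimensional Sobolev inequality $\|\phi\|_{L^2}\le C\|\nabla\phi\|_{L^1}$ for $\phi\in W^{1,1}_0$, and since $\phi=1$ on $E\subset\{f\le-1\}$,
\[
|E|^{1/2}\le\|\phi\|_{L^2}\le C\|f(f+2)\|_{L^1}^{1/2}\|\nabla f\|_{L^2}.
\]
Squaring gives the $\tau=1$ bound, and Step 1 then yields (\ref{wt}).

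The main obstacle is Step 3, i.e.\ ruling out that $f\approx-2$ on a large set at small cost. This is exactly where the vanishing trace is essential: without it, $f\equiv-2$ would be a counterexample. Using $L^1$ control of the gradient via the layer $T$, rather than the $L^2$ gradient directly, is what produces the product structure $\|f(f+2)\|_{L^1}\|\nabla f\|^2_{L^2}$. The care needed is in justifying $\phi\in W^{1,1}_0$ via the chain rule for Lipschitz $\psi$.
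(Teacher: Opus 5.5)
Your proof is correct. It shares the paper's skeleton: pointwise control of $f^2$ by $|f(f+2)|$ away from $f\approx-2$, a transition layer where $|f(f+2)|$ is bounded below, and a two-dimensional isoperimetric mechanism that exploits $f=0$ on the boundary. You realize the key step with a different tool, though.

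The paper splits $[0,1]^2$ into $A_1=\{|f|\le1/2\}$, $A_2=\{|f+2|\le1/2\}$ and the remainder $A_3$. It controls $A_1\cup A_3$ pointwise and uses $|A_3|\le4\|f(f+2)\|_{L^1}$. It then bounds $|A_2|$ by hand. For $f\in C^1_0$, on each vertical line meeting $A_2$ it takes the first entry into $A_2$ and the last preceding visit to $A_1$, so $f$ drops by at least $1$ while staying in $A_3$. This gives $|\pi_1(A_2)|\le\int_{A_3}|\nabla f|\le|A_3|^{1/2}\|\nabla f\|_{L^2}$, likewise for $\pi_2$, and finally $|A_2|\le|\pi_1(A_2)|\,|\pi_2(A_2)|$.

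You instead compose $f$ with a Lipschitz cutoff and quote the $W^{1,1}_0\hookrightarrow L^2$ Gagliardo--Nirenberg--Sobolev inequality. Its standard planar proof is exactly such a slicing argument plus a product-of-projections bound, so the two proofs are the same mechanism at different levels of packaging. Your choice of thresholds ($f=-1$, $f=-3$, layer $\{-1<f<-1/2\}$) differs from the paper's windows of width $1/2$, but that is cosmetic.

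What each buys:
\begin{itemize}
\item The paper's version is elementary and self-contained. It is tied to the coordinate square, since it needs lines that reach the boundary, and it relies on continuity of $f$ plus a tacit density step.
\item Yours is shorter and works directly in $H^1_0$ via the chain rule. It applies verbatim on any bounded domain or surface patch carrying an $L^1$-Sobolev inequality, which matters because the lemma is actually applied on $\tilde S$. It also extends to higher dimensions with $|E|^{(n-1)/n}$ in place of $|E|^{1/2}$.
\end{itemize}
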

\begin{proof}
Given $f\in C^1_0([0,1]^2)$, let 
\[A_1=\{x\in[0,1]^2:|f(x)|\le 1/2\},\quad
A_2=\{x\in[0,1]^2:|f(x)+2|\le 1/2\},
\]
and
\[
A_3=\{x\in[0,1]^2:|f(x)|>1/2,\quad|f(x)+2|>1/2\}.
\]
Clearly, $A_1$, $A_2$ and $A_3$ are mutually disjoint and $A_1\cup A_2\cup A_3=[0,1]^2$. It follows from the definition of $A_1$ and $A_3$ that $|f(x)+2|> 1/2$ on $A_1\cup A_3$, then 
\beq
\int_{A_1\cup A_3}f^2
\le \int_{A_1\cup A_3}|f(f+2)|+2|f|
\le C\|f(f+2)\|_{L^1(A_1\cup A_3)}
,\quad \label{A1A3}
\eeq
and 
\beq
|A_3|
\le 4\|f(f+2)\|_{L^1(A_3)}.\label{|A3|}
\eeq
Then (\ref{wt}) follows (\ref{A1A3}) and the following  
\beq
|A_2|\le C\|f(f+2)\|^\tau_{L^1([0,1]^2)}\|\nabla f\|^{2\tau}_{L^2([0,1]^2)}.
\label{A2}
\eeq
To show (\ref{A2}), let 
\[\pi_1(A_2)
=\{x_1\in[0,1]:\exists\ x_2\in [0,1],(x_1,x_2)\in A_2\},
\]
\[\pi_2(A_2)
=\{x_2\in[0,1]:\exists\ x_1\in [0,1],(x_1,x_2)\in A_2\}.
\]
For any $x_1\in \pi_1(A_2)$, let
\[
t_{-2}(x_1)
=\inf\{t\in[0,1]:|f(x_1,t)+2|\le1/2\}.
\]
Then $t_{-2}(x_1)>0$, since $f\in C^1_0([0,1]^2)$. Let 
\[
t_{0}(x_1)
=\sup\{t\in[0,t_{-2}(x_1)]:|f(x_1,t)|\le1/2\}<t_{-2}(x_1).
\]
Then\[
\{(x_1,x_2)\in[0,1]^2:x_1\in\pi_1(A_2),\ t_{0}(x_1)<x_2<t_{-2}(x_1)\}
\subset A_3.
\]
For each $x_1\in\pi_1(A_2)$, we have 
\beq
f(x_1,t_0(x_1))-f(x_1,t_{-2}(x_1))
=&&f(x_1,t_0(x_1))-[f(x_1,t_{-2}(x_1))+2]+2
\nonumber\\
\ge&&-\frac12-\frac12+2=1.
\nonumber
\eeq
Hence, (\ref{|A3|}) implies 
\beq
|\pi_1(A_2)|
\le&& \int_{\pi_1(A_2)}[f(x_1,t_0(x_1))-f(x_1,t_{-2}(x_1))]dx_1
\nonumber\\
=&& \int_{\pi_1(A_2)}\int_{t_0(x_1)}^{t_{-2}(x_1)}f_{x_2}(x_1,x_2)dx_2dx_1
\nonumber\\
\le&& \int_{A_3}|\nabla f(x_1,x_2)|dx_2dx_1
\le |A_3|^\frac12\|\nabla f\|_{L^2(A_3)}
\nonumber\\
\le&& C\|f(f+2)\|_{L^1([0,1]^2)}^\frac12\|\nabla f\|_{L^2([0,1]^2)}.
\nonumber
\eeq
Similarly, we have 
\[
|\pi_2(A_2)|
\le C\|f(f+2)\|_{L^1([0,1]^2)}^\frac12\|\nabla f\|_{L^2([0,1]^2)}.
\]
Therefore, by noting that $|A_2|\le 1$, we obtain for any $\tau\in[0,1]$, 
\[
|A_2|
\le |A_2|^\tau
\le \big(|\pi_1(A_2)||\pi_2(A_2)|\big)^\tau
\le C\|f(f+2)\|_{L^1([0,1]^2)}^\tau\|\nabla f\|^{2\tau}_{L^2([0,1]^2)}.
\]

\end{proof}

\begin{lem}\label{eigenvalue}
Let $A\in\mathbb{R}^{3\times3}$ and $B=A|_V:V\to\mathbb{R}^3$ be the restriction of $A$ on a 2-dimension linear subspace $V$ of $\mathbb{R}^3$, then 
\[
|(B^TB)^\frac12-\id_V|
\le \dist(A,SO(3)),
\]
where $\id_V:V\to V$ is the identity operator of $V$.

\end{lem}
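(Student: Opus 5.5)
The plan is to reduce the statement to an inequality between singular values. Write $\sigma_1\ge\sigma_2\ge\sigma_3\ge0$ for the singular values of $A$, and $\mu_1\ge\mu_2\ge0$ for those of $B=A|_V$, i.e.\ the square roots of the eigenvalues of $B^TB$ on $V$. Then $(B^TB)^{\frac12}-\id_V$ is a symmetric operator on $V$ with eigenvalues $\mu_j-1$, so
\[
|(B^TB)^\frac12-\id_V|^2=(\mu_1-1)^2+(\mu_2-1)^2 .
\]
For the other side I will use the standard fact $\dist^2(A,SO(3))\ge \dist^2(A,O(3))=\sum_{i=1}^3(\sigma_i-1)^2$. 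This follows from the polar decomposition, or from von Neumann's trace inequality: for $R\in O(3)$ one has $|A-R|^2=|A|^2-2\tr(R^TA)+3$ and $\tr(R^TA)\le\sum\sigma_i$. It therefore suffices to prove
\[
(\mu_1-1)^2+(\mu_2-1)^2\le\sum_{i=1}^3(\sigma_i-1)^2 .
\]

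Next I compare the $\mu_j$ with the $\sigma_i$ using Cauchy interlacing. The matrix $B^TB$ is the compression of $A^TA$ to $V$: for $e\in V$ we have $\<B^TBe,e\>=|Ae|^2=\<A^TAe,e\>$. Interlacing for a compression to a codimension-one subspace gives $\sigma_1^2\ge\mu_1^2\ge\sigma_2^2\ge\mu_2^2\ge\sigma_3^2$, hence $\mu_1\in[\sigma_2,\sigma_1]$ and $\mu_2\in[\sigma_3,\sigma_2]$.

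The main step is the scalar inequality, which I would handle by a case split on where $1$ lies. If $1\le\sigma_2$, then $\mu_2\ge\sigma_3$ implies $|\mu_2-1|\le\max(|\sigma_3-1|,|\sigma_2-1|)$. More usefully, $\mu_1\in[\sigma_2,\sigma_1]$ lies on the side $\ge1$, so $|\mu_1-1|\le|\sigma_1-1|$. Also $\mu_2\in[\sigma_3,\sigma_2]$ gives $|\mu_2-1|\le\max(|\sigma_2-1|,|\sigma_3-1|)$, and both of these terms are available on the right-hand side, so the sum is bounded. If $1\ge\sigma_2$, the situation is symmetric: $|\mu_2-1|\le|\sigma_3-1|$ because $\mu_2\le\sigma_2\le1$ and $\mu_2\ge\sigma_3$, while $|\mu_1-1|\le\max(|\sigma_1-1|,|\sigma_2-1|)$. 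In either case the two bounds use distinct terms from $\{\sigma_1,\sigma_2,\sigma_3\}$, or the same $\sigma_2$ term together with one other term, and the three-term sum dominates. The point to check carefully is that the two terms are always distinct. In the first case the bound for $\mu_1$ uses $\sigma_1$ and the bound for $\mu_2$ uses $\sigma_2$ or $\sigma_3$; in the second case the bound for $\mu_2$ uses $\sigma_3$ and the bound for $\mu_1$ uses $\sigma_1$ or $\sigma_2$. They are distinct, so the inequality follows, and with it the lemma.
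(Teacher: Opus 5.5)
Your proposal is correct and follows essentially the same route as the paper. Both arguments apply Cauchy interlacing to the compression $B^TB$ of $A^TA$, then make a two-case comparison of $(\mu_j-1)^2$ with the terms $(\sigma_i-1)^2$, and conclude with $\sum_i(\sigma_i-1)^2=|(A^TA)^{1/2}-I_3|^2\le \dist^2(A,SO(3))$. The only differences are cosmetic: the paper splits cases on which of $(\sqrt{\lambda_1}-1)^2$, $(\sqrt{\lambda_2}-1)^2$ is larger rather than on the position of $1$ relative to $\sigma_2$, and your von Neumann argument makes explicit the final inequality that the paper takes as known.
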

\begin{proof}Suppose that $V=\mathbb{R}^2$ and $A=(B,b)$, $b\in\mathbb{R}^3$. Then 
\[
A^TA
=\left(\begin{matrix}
B^TB&&&B^Tb\\
b^TB&&&b^Tb
\end{matrix}
\right).
\]
Let $0\le\lambda_1\le \lambda_2\le\lambda_3$ be the eigenvalues of $A^TA$ and $0\le\mu_1\le \mu_2$ be the eigenvalues of $B^TB$. 
By Cauchy's interlace theorem, we have 
\[
\lambda_1\le\mu_1\le \lambda_2\le\mu_2\le\lambda_3.
\]
Let $a_i=(\sqrt{\lambda_i}-1)^2$, $i=1,2,3$. If $\max\{a_1,a_2\}=a_1$, then 
\[\max\{a_1,a_2\}
+\max\{a_2,a_3\}
\le a_1+a_2+a_3.
\]
If 
$\max\{a_1,a_2\}=a_2$, then $1\le \lambda_2\le \lambda_3$ and thus 
\[\max\{a_1,a_2\}
+\max\{a_2,a_3\}
=a_2+a_3.
\]
Hence, 
\beq
|(B^TB)^\frac12-I_2|^2
=&&(\sqrt{\mu_1}-1)^2+(\sqrt{\mu_2}-1)^2
\le\max\{a_1,a_2\}
+\max\{a_2,a_3\}
\nonumber\\
\le&& (\sqrt{\lambda_1}-1)^2+(\sqrt{\lambda_2}-1)^2
+(\sqrt{\lambda_3}-1)^2
\nonumber\\
=&& |(A^TA)^\frac12-I_3|^2
\le \dist^2(A,SO(3)).
\nonumber
\eeq

\end{proof}

Define rotation operator $Q:TS\to TS$ by 
\[QX=X\times\n,\quad \mbox{for all}\quad X\in TS.\]
One can check that $X\in TS$ is a closed vector field, i.e., $dX=0$, if and only if 
\[\div QX=0.\]

{\bf Proof of Theorem \ref{A3}: }Without loss of generality, we may extend $y\in \H^1_0(S_h,\mathbb{R}^3)$ to $y\in \H^1_0(\tilde S_h,\mathbb{R}^3)$ by zero for fixed $S\subset\subset S'\subset\subset\tilde S$. 

By Theorem 4.8 in \cite{Marta} (see also Theorem 6 in \cite{Muller2006}), for any $y\in\H^1_0(S_h,\mathbb{R}^3)$, there exists $R\in C^\infty(\tilde S,\mathbb{R}^{3\times3})$ such that 
\beq
R=I\quad\mbox{on}\quad \tilde S-S'.
\label{R-I}
\eeq
and
\beq
\|\nabla y+I-R\circ\pi\|^2_{L^2(\tilde S_h)}
\le C\varepsilon^2,\qquad
\label{0}
\eeq
\beq
h^3\|\nabla R\|^2_{L^2(\tilde S)}
+h^5\|\nabla^2 R\|^2_{L^2(\tilde S)}
+h^7\|\nabla^3 R\|^2_{L^2(\tilde S)}
\le C\varepsilon^2,\qquad
\label{1}
\eeq
where $\varepsilon=\|\dist(\nabla y+I,SO(3))\|_{L^2(S_h)}$ and $\pi:S_h\to S$, $x+t\n(x)\mapsto x$. It is known that 
\[
\pi_*:T_x\tilde S_h(\cong\mathbb{R}^3)\to T_x\tilde S,\quad \mbox{for all}\quad x\in \tilde S, 
\] 
is the orthogonal projection to $T\tilde S$. Let $\{e_1,e_2,e_3\}$ be the standard basis of $\mathbb{R}^3$. 
Then $\pi_*R^Te_i=R^Te_i-\<R^Te_i,\n\>\n\in C^\infty(\tilde S,T\tilde S)$ and there exists $\eta_i\in H^1_0(\tilde S)\cap C^\infty(\tilde S)$ such that 
\beq
\div D\eta_i=\div \pi_*(R^T-I)e_i\quad \mbox{on}\quad \tilde S.
\label{eta}
\eeq
Then $Q[\pi_*(R^T-I)e_i-D\eta_i]$ is a closed vector field on $\tilde S$. The simple connectivity of $\tilde S$ yields  there exists $\xi_i\in C^\infty(\tilde S)$ such that 
\beq
\pi_*(R^T-I)e_i=QD\xi_i+D\eta_i,\label{etai}
\eeq
and 
\[
\frac{\partial\xi_i}{\partial\nu}=0\quad\mbox{at}\quad\partial \tilde S.
\]

For $x\in\tilde S$, let  
\[
y_i(x)=\bbint_{-h/2}^{\ h/2}\<y,e_i\>(x,t)dt
:=\frac1{h}\int_{-h/2}^{h/2}\<y,e_i\>(x+t\n(x))dt.
\]
Then $\div QDy_i=0$ since $Dy_i$ is of course an exact vector field on $\tilde S$. 

For any $\a:(-\sigma,\sigma)\to \tilde S$, $\b(s,t)=\a(s)+t\n\circ\a(s)$, we have  
\beq
\a'(s)y_i
=&&\frac{\partial}{\partial s}\bbint_{-h/2}^{\ h/2}\<y,e_i\>\circ \b(s,t)dt
=\bbint_{-h/2}^{\ h/2}\<\nabla_{\b_s}y,e_i\>\circ \b(s,t)dt
\nonumber\\
=&&\bbint_{-h/2}^{\ h/2}\<\nabla y\big|_{\b(s,t)}(\id+t\nabla\n)\a'(s),e_i\>dt.
\nonumber
\eeq
Hence, 
\beq
Dy_i(x)
=&&\bbint_{-h/2}^{\ h/2}[(\id+t\nabla\n)\pi_*(\nabla y)^Te_i](x,t)dt.
\nonumber
\eeq
By (\ref{etai}), $\div QDy_i=0$ and (\ref{R-I}), $y_i|_{\tilde S\backslash S}=0$, we have 
\beq
\int_{\tilde S}|D\xi_i|^2
=&&-\int_{\tilde S}\xi_i\div D\xi_i
=\int_{\tilde S}\xi_i\div Q[\pi_*(R^T-I)e_i-Dy_i]
\nonumber\\
=&&\int_{\tilde S}\Big<QD\xi_i(x),\pi_*(R^T(x)-I)e_i
-\bbint_{-h/2}^{\ h/2}[(\id+t\nabla\n)\pi_*(\nabla y)^Te_i](x,t)dt\Big>dx
\nonumber\\
=&&\int_{\tilde S}\Big<QD\xi_i(x),
\frac1{h}\int_{-h/2}^{\ h/2}[(\id+t\nabla\n)\pi_*(R\circ\pi-I-\nabla y)^Te_i](x,t)dt
\Big>dx
\nonumber\\
\le&&
Ch^{-\frac12}\int_{\tilde S}|D\xi_i(x)|
\left(\int_{-h/2}^{\ h/2}|R\circ\pi-I-\nabla y|^2(x,t)dt\right)^\frac12
dx
\nonumber\\
\le &&Ch^{-\frac12}\|D\xi_i\|_{L^2(\tilde S)}
\|R\circ\pi-I-\nabla y\|_{L^2(\tilde S_h)}\nonumber\\
\le &&Ch^{-\frac12}\varepsilon\|D\xi_i\|_{L^2(\tilde S)}
,
\nonumber
\eeq
in view that 
\[
\pi_*(R^T(x)-I)e_i=\bbint_{-h/2}^{\ h/2}[(\id+t\nabla\n(x))\pi_*(R^T(x)-I)e_i]dt.
\]
We obtain 
\beq
\|D\xi_i\|_{L^2(\tilde S)}
\le Ch^{-\frac12}\varepsilon.
\label{Dxi}
\eeq

Define $\eta\in \H^1_0(\tilde S_h,\mathbb{R}^3)$ by 
\beq
\eta(x+t\n(x))
=\dsum_{i=1}^3\eta_i(x)e_i
+t[R(x)-I]\n(x).\label{defeta}
\eeq
For any $\a:(-\sigma,\sigma)\to \tilde S$, $\b(s)=\a(s)+t\n\circ\a(s)$, we have 
\[
\eta\circ \b(s)
=\sum_{i=1}^3
\eta_i\circ \a(s)e_i
+t[R\circ \a(s)-I]\n\circ \a(s)
.\]
By (\ref{etai}), 
\beq
\nabla_{\b'(s)}\eta
=&&\sum_{i=1}^3
[\a'(s)\eta_i]e_i
+t(\nabla_{\a'(s)}R)\n\circ \a(s)
+[R\circ \a(s)-I]t\nabla_{\a'(s)}\n
\nonumber\\
=&&\sum_{i=1}^3
\<\a'(s),(R^T-I)e_i-QD\xi_i\>e_i
+t(\nabla_{\a'(s)}R)\n\circ \a(s)
+[R\circ \a(s)-I][\b'(s)-\a'(s)]
\nonumber\\
=&&[R\circ\a(s)-I]\b'(s)
-\sum_{i=1}^3
\<\a'(s),QD\xi_i\>e_i
+t(\nabla_{\a'(s)}R)\n\circ \a(s).
\nonumber
\eeq
Hence, by also $(\nabla_{\n}\eta)\circ\b(s)
=[R\circ\a(s)-I]\n\circ \a(s)$, we obtain on $\tilde S_h$ that 
\[|\nabla \eta+I-R\circ\pi|
\le
C\left(\sum_{i=1}^3
|D\xi_i|
+t|\nabla R|\right)\circ\pi.
\]
(\ref{Dxi}) and (\ref{1}) imply that  
\beq
\|\nabla \eta+I-R\circ\pi\|^2_{L^2(\tilde S_h)}
\le
Ch\int_{\tilde S}\sum_{i=1}^3|D\xi_i|^2+h^2|\nabla R|^2
\le C\varepsilon^2.
\label{eta+I-R}
\eeq
By (\ref{0}) and the definition of $\varepsilon$, $\eta\in \H^1_0(\tilde S_h,\mathbb{R}^3)\cap C^\infty(\tilde S_h,\mathbb{R}^3)$ satisfy
\beq
\|\nabla \eta-\nabla y\|_{L^2(\tilde S_h)}
+\|\dist(\nabla \eta+I,SO(3))\|_{L^2(\tilde S_h)}
\le C\varepsilon.
\qquad
\label{eta-y}
\eeq
Let $w=\<\eta,\n\>$ and $W=\eta-w\n$.

{\bf Step 1:} Estimates for $\|Dw\|_{L^2(\tilde S_h)}$ and $\|DW\|_{L^2(\tilde S_h)}$. By the definition of $\eta$, it suffices to consider the restriction of $\eta$ on $\tilde S$, in view that  the tangential derivative of the remaining part still containing `` $t$ ".

Let $\xi (x)=\dsum_{i=1}^3\eta_i(x)e_i$ and $u(x)=x+\xi (x)
$, for all $x\in\tilde S$. Then 
\[
u(x)=x+t\n(x)+\eta(x+t\n(x))
-tR(x)\n(x),\quad\mbox{for all}\quad x\in\tilde S,\quad|t|\le h/2.
\]
For any $\a:(-\sigma,\sigma)\to \tilde S$, $\b(s)=\a(s)+t\n\circ\a(s)$, we have 
\beq
u_*\a'(s)
=&&\frac{d}{ds}u\circ\a(s)
=\frac{d}{ds}\big(\b(s)+\eta\circ \b(s)-tR\circ\a(s)\n\circ\a(s)\big)
\nonumber\\
=&&(I+\nabla\eta)\b'(s)
-t(\nabla_{\a'(s)}R)\n\circ\a(s)
-tR\circ\a(s)\nabla_{\a'(s)}\n
\nonumber\\
=&&(I+\nabla\eta)|_{\b(s)}\a'(s)
-t(\nabla_{\a'(s)}R)\n\circ\a(s)
-t(I+\nabla\eta-R\circ\pi)|_{\b(s)}\nabla_{\a'(s)}\n.
\nonumber
\eeq
By Lemma \ref{eigenvalue}, we obtain 
\[
|(u_*^Tu_*)^\frac12-\id|
\le \dist(\nabla \eta+I,SO(3))
+Ct|\nabla R|+Ct|\nabla \eta+I-R\circ\pi|.
\]
Hence, by (\ref{eta-y}), (\ref{1}) and (\ref{eta+I-R}), 
\beq
&&h\|(u_*^Tu_*)^\frac12-\id\|^2_{L^2(\tilde S)}
\nonumber\\
\le&& C\big(\|\dist(\nabla \eta+I,SO(3))\|^2_{L^2(\tilde S_h)}
+h^3\|\nabla R\|^2_{L^2(\tilde S)}
+h^2\|\nabla \eta+I-R\circ\pi\|^2_{L^2(\tilde S_h)}
\big)
\nonumber\\
\le&& C\varepsilon^2.
\nonumber
\eeq
Theorem \ref{A} yields that 
\beq
\|V\|_{L^2(\tilde S)}+\|v\|_{(H^1(\tilde S))'}
\le C\|(u_*^Tu_*)^\frac12-\id\|_{L^2(\tilde S)}
\le Ch^{-\frac12}\varepsilon,
\label{wH-1}
\eeq
where $v=\<\xi ,\n\>$, $V=\xi -v\n$. 
(\ref{1}), (\ref{eta}) and Poincar\'e's inequality imply that 
\beq
\|\xi \|_{H^2(\tilde S)}
\le&&
C\sum_{i=1}^3\|\eta_i\|_{H^2(\tilde S)}
\le
C(\|R-I\|_{L^2(S)}
+\|\nabla R\|_{L^2(S)})
\le
Ch^{-\frac32}\varepsilon.
\label{wH2}
\eeq
(\ref{wH-1}), (\ref{wH2}) and Sobolev interpolation inequality yield
\beq
\|Dv\|_{L^2(\tilde S)}
\le&&\|v\|^\frac13_{H^{-1}(\tilde S)}
\|v\|^\frac23_{H^2(\tilde S)}
\le C(h^{-\frac12}\varepsilon)^\frac13
(h^{-\frac32}\varepsilon)^\frac23
=Ch^{-\frac76}\varepsilon,
\nonumber
\eeq
and
\beq
\|DV\|_{L^2(\tilde S)}
\le&&\|V\|^\frac12_{L^2(\tilde S)}
\|V\|^\frac12_{H^2(\tilde S)}
\le C(h^{-\frac12}\varepsilon)^\frac12
(h^{-\frac32}\varepsilon)^\frac12
=Ch^{-1}\varepsilon.
\nonumber
\eeq
Hence, by (\ref{1}) and Poincar\'e's inquality, we obtain 
\[
\|Dw\|^2_{L^2(\tilde S_h)}
\le Ch\|Dv\|^2_{L^2(\tilde S)}
+Ch^3\|\nabla R\|^2_{L^2(\tilde S)}
+Ch^3\|R-I\|^2_{L^2(\tilde S)}
\le Ch^{-\frac43}\varepsilon^2,
\]
and 
\[
\|DW\|^2_{L^2(\tilde S_h)}
\le Ch\|DV\|^2_{L^2(\tilde S)}
+Ch^3\|\nabla R\|^2_{L^2(\tilde S)}
+Ch^3\|R-I\|^2_{L^2(\tilde S)}
\le Ch^{-1}\varepsilon^2.
\]

{\bf Step 2:} Estimates for $\|w_t\|_{L^2(\tilde S_h)}$ and $\|W_t\|_{L^2(\tilde S_h)}$.

Denote 
\[\E=[(\nabla\eta+I)^T(\nabla\eta+I)]^\frac12-I\quad\mbox{and}\quad
\E'=[(\nabla\eta+I)(\nabla\eta+I)^T]^\frac12-I.
\]
Let $\{E_1,E_2\}$ is an orthogonal basis of $T_xS$, $x\in S$. The matrix of $\nabla\eta$ takes the form 
\[
\nabla\eta
=\left(
\begin{matrix}
DW+w\nabla\n&&W_t\\
(Dw-\nabla_W\n)^T&&w_t
\end{matrix}
\right)\]
under basis $\{E_1,E_2,\n(x)\}$. We obtain 
\[
(\nabla\eta)^T(\nabla\eta)
=\left(
\begin{matrix}
(DW+w\nabla\n)^T(DW+w\nabla\n)
+(Dw-\nabla_W\n)(Dw-\nabla_W\n)^T
&&*\\
*&&w_t^2+|W_t|^2
\end{matrix}
\right),
\]
and 
\[
(\nabla\eta)
(\nabla\eta)^T
=\left(
\begin{matrix}
(DW+w\nabla\n)(DW+w\nabla\n)^T
+W_tW_t^T
&&*\\
*&&w_t^2+|Dw-\nabla_W\n|^2
\end{matrix}
\right).
\]
Let $\id:T_x\tilde S\to T_x\tilde S$ be the identity operator on $T_x\tilde S$, $x\in\tilde S$. Note that 
\[\E^2+2\E=(\nabla\eta)^T(\nabla\eta)+(\nabla\eta)^T+\nabla\eta,
\]
and
\[\E'^2+2\E'=(\nabla\eta)(\nabla\eta)^T+(\nabla\eta)^T+\nabla\eta.
\]
We obtain 
\[
\<\E^2+2\E-(\E'^2+2\E'),\id\>
=\<(\nabla\eta)^T(\nabla\eta)-(\nabla\eta)(\nabla\eta)^T,\id\>
=|Dw-\nabla_W\n|^2-|W_t|^2,
\]
and
\[
|w_t(w_t+2)|
\le |W_t|^2+|\E^2+2\E|.
\]
Hence, by $|\E|=|\E'|\le \dist(\nabla\eta+I,SO(3))$, (\ref{E}), Lemma \ref{eigenvalue} and H\"older's inequality, 
\beq
\|W_t\|^2_{L^2(\tilde S_h)}
+\|w_t(w_t+2)\|_{L^1(\tilde S_h)}
\le&& C\|Dw-\nabla_W\n\|^2_{L^2(\tilde S_h)}
+C\varepsilon^2+Ch^\frac12\varepsilon
\nonumber\\
\le&& C\varepsilon^2+Ch^\frac12\varepsilon.
\label{wt(wt+2)}
\eeq
By (\ref{R-I}), (\ref{1}) and (\ref{defeta}), we known that for each fixed $t\in(-h/2,h/2)$, 
\[
f(x):=w_t(x+t\n(x))
=\<[R(x)-I]\n(x),\n(x)\>
\in H^1_0(\tilde S),
\]
and 
\[
\|Df\|_{L^2(\tilde S)}
\le 
C\|R-I\|_{L^2(\tilde S)}
+C\|\nabla R\|_{L^2(\tilde S)}
\le 
C\|\nabla R\|_{L^2(\tilde S)}
\le Ch^{-\frac32}\varepsilon.
\]
Let $\tau\in[0,1]$. Lemma \ref{f(f+2)} and (\ref{wt(wt+2)}) yield 
\beq
\|f\|^2_{L^2(\tilde S)}
\le&& C\|f(f+2)\|_{L^1(\tilde S)}
+C\|f(f+2)\|^\tau_{L^1(\tilde S)}\|D f\|^{2\tau}_{L^2(\tilde S)}
\nonumber\\
\le&& Ch^{-1}(\varepsilon^2+h^\frac12\varepsilon)
+C[h^{-1}(\varepsilon+h^\frac12)\varepsilon]^\tau
(h^{-\frac32}\varepsilon)^{2\tau}
\nonumber\\
\le&& Ch^{-1}(\varepsilon^2+h^\frac12\varepsilon)
+Ch^{-\frac72\tau}(h^{-\frac12\tau}\varepsilon^\tau+1)
\varepsilon^{3\tau}
.\nonumber
\eeq
Integrating over $t\in(-h/2,h/2)$ and substituting into (\ref{wt(wt+2)}), we otain 
\[
\|W_t\|^2_{L^2(\tilde S_h)}
+\|w_t\|^2_{L^2(\tilde S_h)}
\le C\big(\varepsilon^2+h^\frac12\varepsilon
+h^{1-\frac72\tau}(h^{-\frac12\tau}\varepsilon^\tau+1)
\varepsilon^{3\tau}
\big).
\]
If $h^{-\frac12}\varepsilon\ge1$, let $\tau=\frac12$, we obtain 
\[
\|W_t\|^2_{L^2(\tilde S_h)}
+\|w_t\|^2_{L^2(\tilde S_h)}
\le C\big(h^\frac12\varepsilon
+h^{-1}\varepsilon^2
+h^{-\frac34}\varepsilon^\frac32
\big)
\le Ch^{-1}\varepsilon^2.
\]
If $h^{-\frac12}\varepsilon\le1$, let $\tau=\frac23$. Then 
\[
\|W_t\|^2_{L^2(\tilde S_h)}
+\|w_t\|^2_{L^2(\tilde S_h)}
\le C\big(h^\frac12\varepsilon
+h^{-\frac43}\varepsilon^2
\big).
\]
It suffices to consider the case $h^{-\frac43}\varepsilon^2
\le h^\frac12\varepsilon
$, i.e., $\varepsilon^2
\le h^{\frac{11}3}$. 

Claim:
\[
\|\nabla \eta\|_{L^\infty(\tilde S_h)}
\le C h^{\frac{5}{33}}.
\]
Then we finish the proof for sufficiently small $h>0$ by (\ref{eta-y}) and 
\beq
\|\nabla\eta\|_{L^2(\tilde S_h)}
\le
&&C\big(\|w_t\|_{L^2(\tilde S_h)}
+\|W_t\|_{L^2(\tilde S_h)}
+\|Dw\|_{L^2(\tilde S_h)}
+\|DW\|_{L^2(\tilde S_h)}\big)
\nonumber\\
\le 
&&C\big(\|w_t\|_{L^2(\tilde S_h)}
+\|W_t+Dw-\nabla_W\n\|_{L^2(\tilde S_h)}
+\|Dw\|_{L^2(\tilde S_h)}
+\|DW\|_{L^2(\tilde S_h)}\big)
\nonumber\\
\le 
&&C\|\sym \nabla \eta\|_{L^2(\tilde S_h)}
+Ch^{-\frac23}\varepsilon
\nonumber\\
\le 
&&C\|\dist(\nabla \eta+I,SO(3))\|_{L^2(\tilde S_h)}
+C\|\nabla \eta\|_{L^\infty(\tilde S_h)}
\|\nabla \eta\|_{L^2(\tilde S_h)}
+Ch^{-\frac23}\varepsilon
\nonumber\\
\le 
&&C h^{\frac{5}{33}}\|\nabla \eta\|_{L^2(\tilde S_h)}
+Ch^{-\frac23}\varepsilon.
\nonumber
\eeq
To prove the claim, by Sobolev embedding and interpolation inequality, we obtain 
\[
\|f\|_{L^\infty(\tilde S)}
\le C_\sigma\|f\|_{W^{1,2+\sigma}(\tilde S)}
\le C_\sigma\|f\|^{\frac{2-\sigma}{2+\sigma}}_{W^{1,2}(\tilde S)}
\|f\|^{\frac{2\sigma}{2+\sigma}}_{W^{1,4}(\tilde S)}
\le
C_\sigma
\|f\|^{\frac{2-\sigma}{2+\sigma}}_{W^{1,2}(\tilde S)}\|f\|^{\frac{2\sigma}{2+\sigma}}_{W^{2,2}(\tilde S)}
\]
for all $f\in W^{2,2}(\tilde S)$, where constant $\sigma>0$ is sufficiently small to be determined later.

By (\ref{eta}), (\ref{1}), (\ref{R-I}) and Poincar\'e's inequality, we obtain 
\beq
\|D\eta_i\|_{L^\infty(\tilde S)}
\le&&C\|\eta_i\|^{\frac{2-\sigma}{2+\sigma}}_{W^{2,2}(\tilde S)}\|\eta_i\|^{\frac{2\sigma}{2+\sigma}}_{W^{3,2}(\tilde S)}
\le C\|R-I\|^{\frac{2-\sigma}{2+\sigma}}_{W^{1,2}(\tilde S)}\|R-I\|^{\frac{2\sigma}{2+\sigma}}_{W^{2,2}(\tilde S)}
\nonumber\\
\le&&C\|\nabla R\|^{\frac{2-\sigma}{2+\sigma}}_{L^2(\tilde S)}
\|\nabla^2 R\|^{\frac{2\sigma}{2+\sigma}}_{L^2(\tilde S)}
\le Ch^{-\frac32\frac{2-\sigma}{2+\sigma}
-\frac52\frac{2\sigma}{2+\sigma}}\varepsilon
\nonumber\\
\le&&Ch^{-\frac{6+7\sigma}{2(2+\sigma)}}
\varepsilon,
\nonumber
\eeq
\[
h\|\nabla R\|_{L^\infty(\tilde S)}
\le Ch\|\nabla R\|^{\frac{2-\sigma}{2+\sigma}}_{W^{1,2}(\tilde S)}\|\nabla R\|^{\frac{2\sigma}{2+\sigma}}_{W^{2,2}(\tilde S)}
\le Ch^{1-\frac52\frac{2-\sigma}{2+\sigma}
-\frac72\frac{2\sigma}{2+\sigma}}
\varepsilon
=Ch^{-\frac{6+7\sigma}{2(2+\sigma)}}
\varepsilon,
\]
and
\beq
\|\eta_t\|_{L^\infty(\tilde S)}
=&&\|R-I\|_{L^\infty(\tilde S)}
\le C\|R-I\|^{\frac{2-\sigma}{2+\sigma}}_{W^{1,2}(\tilde S)}\|R-I\|^{\frac{2\sigma}{2+\sigma}}_{W^{2,2}(\tilde S)}
\le Ch^{-\frac{6+7\sigma}{2(2+\sigma)}}
\varepsilon.
\nonumber
\eeq
Hence, $\varepsilon^2
\le h^{\frac{11}3}$ yields 
\beq
\|\nabla \eta\|_{L^\infty(\tilde S_h)}
\le &&C\Big(
\|D\eta\|_{L^\infty(\tilde S_h)}
+\|\eta_t\|_{L^\infty(\tilde S_h)}\Big)
\nonumber\\
\le &&C\Big(
\sum_{i=1}^3\|D\eta_i\|_{L^\infty(\tilde S)}
+h\|\nabla R\|_{L^\infty(\tilde S)}
+\|R-I\|_{L^\infty(\tilde S)}\Big)
\nonumber\\
\le&&C h^{-\frac{6+7\sigma}{2(2+\sigma)}}
\varepsilon
\le C h^{\frac{2-5\sigma}{3(2+\sigma)}}.
\nonumber
\eeq
We finish the proof of claim by taking $\sigma=\frac15$.

\qed

\section{$\Ga$-limits}
Fix $h_0>0$ sufficiently small such that mapping
\[\pi:S_{h_0}\to S,\quad x+t\n(x)\mapsto x\]
is well-defined on $S_{h_0}$. Let $U=S\times[-1/2,1/2]$ be the product manifold of Riemannian manifolds $S$ and $[-1/2,1/2]$ equipped with metric
\[
\<(X_1,t_1\partial_t),(X_2,t_2\partial_t)\>=\<X_1,X_2\>+t_1t_2,\quad\mbox{for all}\quad 
(X_i,t_i)\in T_xS\times \mathbb{R}.
\]
Then the Riemannian connection $\D$ on $U$ takes the form 
\beq
\D_{X_1+t_1\partial_t}(X_2+t_2\partial_t)
=D_{X_1}X_2+t_1(\partial_tt_2)\partial_t.\label{connectionU}
\eeq
Define 
\[
\pi:S\times[-1/2,1/2]
\to S,\quad (x,t)\mapsto x,
\]
and
\[
\pi_h:S\times[-1/2,1/2]
\to S_h,\quad (x,t)\mapsto x+th\n(x).
\]
Given a family of displacements $\{y_h\in\H_0^1(S_h,\mathbb{R}^3)\}_{h\in(0,h_0)}$ with 
\beq
E_h(y_h):=\frac1{h}\int_{S_h}\W(\nabla y_h+I)\le Ch^\b,
\label{energy}
\eeq
consider the rescaling displacement $\eta_h$ defined on universal manifold $U$ 
\beq
\eta_h
=h^{-\frac\b2}y_h\circ\pi_h
=W_h+w_h\n,\label{etah}
\eeq
and denote 
\[
F_h
=(\nabla y_h+I)\circ\pi_h.
\]

Let $dz$ be the unit volume element of $S_h$ (under the dot product of $\mathbb{R}^3$) at $z=x+t\n(x)$ and $dx\wedge dt$ be the one of $U$ at $(x,t)$. 
Let $\{E_1,E_2\}$ be the orthonormal basis of $T_xS$, $x\in S$. 
Then $dz=dx\wedge dt=E_1\wedge E_2\wedge \n(x)$ and moreover 
\beq
&&\pi_h^*dz(E_1,E_2,\n(x))
=dz(\pi_{h*}E_1,\pi_{h*}E_2,\pi_{h*}\n(x))
\nonumber\\
=&&\det\Big[(\id+th\nabla\n(x))E_1\ \ \  
(\id+th\nabla\n(x))E_2\ \  \ 
h\n(x)\Big]
\nonumber\\
=&&h\det(\id+th\nabla\n(x)).
\nonumber
\eeq
We obtain 
\[
\pi_h^*dz=h\det(\id+th\nabla\n(x))dx\wedge dt,
\] and  change of variables formula for any $f\in L^1(S_h)$,
\beq
\frac1h\int_{S_h}f(z)dz
=&&\frac1h\int_{U}f\circ\pi_h(x,t)\pi_h^*dz
\nonumber\\
=&&\int_{-\frac12}^{\frac12}\int_S
f\circ\pi_h(x,t)\det(\id+th\nabla\n(x))dxdt
\nonumber\\
=&&\int_{-\frac12}^{\frac12}\int_S
f\circ\pi_h(x,t)dxdt
+O(h\|f\|_{L^1(S_h)}).
\label{Descartes}
\eeq
\beq
I_h(F_h):=\int_{-\frac12}^{\frac12}\int_S
\W(F_h(x,t))
dxdt\le Ch^\b.
\label{Ih}
\eeq
By (\ref{Descartes}), Theorem \ref{thm3d} and (\ref{Ih}), we obtain 
\beq
\int_{-\frac12}^\frac12\int_S
|\nabla y_h|^2\circ\pi_h(x,t)dxdt
\le&& Ch^{-\frac43}\int_{-\frac12}^\frac12\int_S
\dist^2(F_h(x,t),SO(3))dxdt
\le
Ch^{\b-\frac43}.\qquad
\label{yh}
\eeq

The proof of Theorem \ref{Gamma} mainly follows the strategies of \cite[Theorem 2.1]{Conti}.

{\bf Proof of Theorem \ref{Gamma}:}

{\bf Compactness:} 
Let $\overline{\eta}_h=\overline{W}_h+\overline{w}_h\n\in H^1_0(S,\mathbb{R}^3)$, where 
\[\overline{W}_h(x)
=\int_{-\frac12}^{\frac12}W_h(x,s)ds
,\quad
\overline{w}_h(x)
=\int_{-\frac12}^{\frac12}w_h(x,s)ds
.
\]
Note that 
\[
\partial_t\eta_h(x,t)
=\partial_t[y_h(x+th\n(x))]
=h\nabla_{\n}y_h\circ\pi_h(x,t).
\]
Then 
\beq
\|\eta_h-\overline{\eta}_h\circ\pi\|^2_{L^2(U)}
\le&&\int_U\int_{-\frac12}^{\frac12}
|\eta_h(x,t)-\eta_h(x,s)|^2dsdxdt
\le\int_{-\frac12}^{\frac12}\int_S\int_{-\frac12}^{\frac12}
\int_{-\frac12}^{\frac12}
|\partial_t\eta_h(x,\tau)|^2d\tau dsdxdt
\nonumber\\
\le&&h^2\int_S\int_{-\frac12}^{\frac12}
|\nabla_{\n}y_h\circ\pi_h(x,\tau)|^2d\tau dx
\le Ch^{2-\frac43+\b}
= Ch^{\frac23+\b}
.
\nonumber
\eeq
Fix $t\in[-1/2,1/2]$. Define 
\[
u_h(x)=x+\eta_h(x,t)=x+y_h\circ\pi_h(x,t).
\]
Then for any $\tau\in T_xS$, we have 
\beq
u_{h*}\tau
=\tau+(\nabla y_h\circ\pi_h)(\id+th\nabla\n(x))\tau
=&&
F_h(x,t)\tau+th[\nabla y_h\circ\pi_h(x,t)]\nabla\n(x)\tau.\qquad
\label{uh*}
\eeq
By (\ref{uh*}), Lemma \ref{eigenvalue} and (\ref{yh}), we obtain 
\beq
&&\int_{-\frac12}^\frac12\int_S|(u_{h*}^Tu_{h*})^\frac12-\id|^2(x,t)dxdt
\nonumber\\
\le&&C
\int_{-\frac12}^\frac12\int_S\dist^2(F_h(x,t)+th[\nabla y_h\circ\pi_h(x,t)]\nabla\n(x),SO(3))
dxdt
\nonumber\\
\le&&C
\int_{-\frac12}^\frac12\int_S\dist^2(F_h(x,t),SO(3))
+h^2|\nabla y_h|^2\circ\pi_h(x,t)dxdt
\nonumber\\
\le&&
Ch^{\b}
+Ch^{\frac23+\b}
\le 
Ch^{\b}.
\label{uTu-I}
\eeq
Hence, Theorem \ref{thm2d} yields 
\beq
\|\overline{W}_h\|^2_{L^2(S)}
\le&&\int_{-\frac12}^\frac12\int_S|W_h|^2(x,t)dxdt
\le C\int_{-\frac12}^\frac12\int_S|(u_{h*}^Tu_{h*})^\frac12-\id|^2(x,t)dxdt
\le 
Ch^{\b},
\nonumber
\eeq
\beq
\bigg|\int_S\varphi \overline{w}_hdx\bigg|
\le&&\int_{-\frac12}^\frac12\bigg|\int_S\varphi(x)w_h(x,t)dx\bigg|dt
\nonumber\\
\le&& C\|\varphi\|_{H^1(S)}
\int_{-\frac12}^\frac12\bigg|\int_S|(u_{h*}^Tu_{h*})^\frac12-\id|^2(x,t)dx\bigg|^\frac12dt
\nonumber\\
\le&&C\|\varphi\|_{H^1(S)}
h^\frac\b2\qquad\qquad\qquad
\mbox{for any}\qquad \varphi\in H^1(S),
\nonumber
\eeq
and by (\ref{L1}), (\ref{yh}), (\ref{connectionU}), 
\beq
&&\|\sym D\overline{W}_h+\overline{w}_h\Pi\|_{L^1(S)}
\nonumber\\
\le&& C\int_{-\frac12}^\frac12\int_S|\sym DW_h+w_h\Pi|(x,t)
dxdt
\nonumber\\
\le&& C\int_{-\frac12}^\frac12\bigg|\int_S|(u_{h*}^Tu_{h*})^\frac12-\id|^2(x,t)dx\bigg|^\frac12dt
\le Ch^{\frac\b2}.
\nonumber
\eeq
Thus, $(h^{-\frac\b2}\overline{W}_h,h^{-\frac\b2}\overline{w}_h,h^{-\frac\b2}(\sym D\overline{W}_h+\overline{w}_h\nabla\n))$ is bounded in $L^2(S,T)\times (H^1(S))'\times L^1(S,T^2_{\sym})$.

{\bf Lower Bound:} Let $G_h=h^{-\frac\b2}[(F_h^TF_h)^\frac12-I]\in L^2(U,\mathbb{R}^{3\times3}_{\sym})$ and 
\[
\overline{G}_h
=\int_{-\frac12}^\frac12G_h(\cdot,t)dt\in L^2(S,\mathbb{R}^{3\times3}_{\sym}).
\]
$\overline{G}_h$ is uniformly bounded in $L^2(S,\mathbb{R}^{3\times3}_{\sym})$, since $|G_h|=h^{-\frac\b2}\dist(F_h,SO(3))$. Taking a subsequence, we obtain 
\beq
\overline{G}_h\rightharpoonup G\quad\mbox{weakly in}\quad L^2(S,\mathbb{R}^{3\times3}_{\sym}).
\label{toG}
\eeq
Let $G_{\tan}\in T^2S$ be the restriction of $G$ on $TS$, i.e., 
\[
G_{\tan}(\tau,\mu)=\<G\tau,\mu\>\quad\mbox{for all}\quad \tau,\mu\in T_xS,\quad x\in S.
\]
We conclude 
\[\liminf_{h\to 0^+}\frac{I_h(F_h)}{h^\b}
\ge \frac12\int_S\Q_2(G_{\tan}(x))dx
\]
by the standard methods in nonlinear elastic shell theory, referring to \cite[$Part$ II of the proof of Theorem 2.1]{Conti} for clamped plates, or \cite[Theorem 6.1]{Muller2002} and \cite[Theorem 5.10]{Marta} for plates and shells without clamped lateral boundary conditions, respectively. 

It suffices to relate $G_{\tan}$ with $W$ and $w$. Note that 
\[
F_h^TF_h
=(h^{\frac\b2}G_h+I)^2
=h^\b G^2_h+2h^{\frac\b2}G_h+I.
\]
We obtain 
\beq
\left\|\frac12h^{-\frac\b2}(F_h^TF_h-I)-G_h\right\|_{L^1(U)}
=\frac12h^{\frac\b2} \|G_h\|^2_{L^2(U)}
\le Ch^{\frac\b2}.\label{FTF-Gh}
\eeq
Fix $t\in[-1/2,1/2]$, for any $\tau,\mu\in T_xS$, (\ref{uh*})yields 
\[|\<F_h\tau,F_h\mu\>-\<u_{h*}\tau,
u_{h*}\mu\>|
\le C|\tau||\mu|\big(h|\nabla y_h|+h^2|\nabla y_h|\big)\circ\pi_h.
\]
By (\ref{yh}), 
\beq
h^{-\frac\b2}\|(F_h^TF_h-I)_{\tan}-(u_{h*}^Tu_{h*}-\id)\|_{L^1(U)}
\le C\big(h^{\frac13}+h^{\frac23+\frac\b2}\big).\label{FTF-uTu}
\eeq
(\ref{uu-id}) yields 
\beq
\frac12h^{-\frac\b2}(u_{h*}^Tu_{h*}-\id)
=\frac12h^{-\frac\b2}A_h+h^{-\frac\b2}(\sym DW_h+w_h\nabla\n),\label{u*Tu*-I}
\eeq
where 
\[A_h=(DW_h+w_h\nabla\n)^T(DW_h+w_h\nabla\n)
+(Dw_h-\nabla_{W_h}\n)\ot (Dw_h-\nabla_{W_h}\n).\]
If $\b>\frac83$, it follows from (\ref{yh}) that 
\beq
h^{-\frac\b2}\|A_h\|_{L^1(U)}
\le&&Ch^{-\frac\b2}
\int_{-\frac12}^\frac12\int_S
|\nabla y_h|^2\circ\pi_h(x,t)dxdt
\le Ch^{\frac\b2-\frac43}.
\label{Ah>8/3}
\eeq
Hence, we conclude from (\ref{toG}) and (\ref{FTF-Gh})--(\ref{Ah>8/3}) that 
\[
G_{\tan}=\sym DW+w\Pi\in L^2(S,T^2_{\sym}).
\]
If $0\le \b\le\frac83$, then (\ref{E}) and (\ref{uTu-I}) imply 
\beq
h^{-\frac\b2}\|A_h\|_{L^1(U)}
\le&&Ch^{-\frac\b2}\int_{-\frac12}^\frac12
\left(\int_S
\big(|DW_h+w_h\nabla\n|^2+|Dw_h-\nabla_{W_h}\n|^2\big)(x,t)dx\right)dt
\nonumber\\
\le&&Ch^{-\frac\b2}
\int_{-\frac12}^\frac12\bigg|\int_S|(u_{h*}^Tu_{h*})^\frac12-\id|^2(x,t)dx\bigg|^\frac12dt
\le C.
\nonumber
\eeq
Therefore, $h^{-\frac\b2}\dint_{-\frac12}^\frac12A_h(\cdot,t)dt$ is uniformly bounded in $L^1(S,T^2_{\sym})$, then converges weakly$^*$ to a Radon measure $\mu\in \M(S,T^2_+)$ by taking a subsequence. We obtain 
\[
G_{\tan}dx=\sym DW+w\Pi+\mu.
\]

{\bf Upper Bound:} Let $X_1=\frac{Q\partial_2}{\<\partial_1,Q\partial_2\>}$, $X_2=-\frac{Q\partial_1}{\<\partial_1,Q\partial_2\>}$. Then  $\{X_1,X_2\}$ is the conjugate basis of $\{\partial_1,\partial_2\}$, i.e., $\<X_i,\partial_j\>=\delta_{ij}$, $i,j=1,2$. 

Analogous to the proof of \cite[Theorem 2.1]{Conti}, just need to consider the case that $W\in C^\infty_0(S,T)$, $w\in C^\infty_0(S)$, $a\in C^\infty_0(S,T)$, $b\in C^\infty_0(S)$ and $\mu=\sum_{i,j=1}^2m_{ij}X_i\ot X_j$, where $M=(m_{ij})\in \mathbb{R}^{2\times2}_+$. 

{\bf Case 1:} $\b>\frac83$. In this case, $M=0$. Let $y_h=h^{\frac\b2}\tilde y_h\in\H^1_0(S_h,\mathbb{R}^3)$, where 
\[\tilde y_h(x+t\n(x))
=(W+w\n)(x)+t(a+b\n-Dw+\nabla\n W)(x)
\quad\mbox{for all}\quad x+t\n(x)\in S_h.
\]
Let $\{E_1,E_2\}$ be an orthonormal basis of $T_xS$, $x\in S$. Then the matrix of $\nabla\tilde y_h|_{x+t\n(x)}$ under basis $\{E_1,E_2,\n(x)\}$ takes the form 
\[
\nabla \tilde y_h|_{x+t\n(x)}
=
\left(
\begin{matrix}
DW+w\nabla\n+tD(a-Dw+\nabla\n W)+tb\nabla\n&&&&a-Dw+\nabla\n W\\
[Dw-\nabla\n W-t\nabla\n (a-Dw+\nabla\n W)+tDb]^T&&&&b
\end{matrix}
\right).
\]
Hence, for $(x,t)\in U=S\times[-1/2,1/2]$, we have 
\beq
h^{-\frac\b2}(F_h^TF_h-I)(x,t)
=&&(h^{\frac\b2}\nabla^T \tilde y_h\nabla \tilde y_h
+\nabla^T\tilde y_h+\nabla\tilde y_h)\circ\pi_h(x,t)
\nonumber\\
=&&
\left(
\begin{matrix}
2\sym (DW+w\nabla\n)&&&&
a
\\
a^T&&&&2b
\end{matrix}
\right)(x)+O(h)+O(h^{\frac\b2}).
\nonumber
\eeq
We obtain 
\[
\frac12h^{-\frac\b2}(F_h^TF_h-I)
\to\left(
\begin{matrix}
\sym (DW+w\nabla\n)&&&&
\frac12a
\\
\frac12a^T&&&&b
\end{matrix}
\right)\circ\pi\quad\mbox{strongly in}\quad L^2(U,\mathbb{R}^{3\times3}_{\sym}).
\]

{\bf Case 2:} $0\le \b<2$.  Let $\rho\in C^\infty(\mathbb{R})$ satisfy $\rho\ge0$, $\supp \rho\subset [-1,1]$ and $\int_{\mathbb{R}}\rho(s)ds=1$. Let $\rho_\sigma(s)=\sigma^{-1}\rho(\sigma^{-1}s)$ and 
\[
\zeta(s)
=
\begin{cases}
s,\quad &s\in [k,k+\frac12),\ k\in\mathbb{Z},\\
1-s,\quad &s\in [k+\frac12,k+1),\ k\in\mathbb{Z},
\end{cases}
\]
be the triangular wave. For $\delta=h^{\frac\b4}|\ln h|^{-1}$, define  
\[
\zeta_\delta(s)
=\delta(\rho_{|\ln h|^{-1}}*\zeta)(\delta^{-1}s).
\]
Then $\zeta_\delta$ is $\delta$-periodic and $\zeta_\delta\overset{*}{\rightharpoonup}
0$ weakly$^*$ in $W^{1,\infty}(\mathbb{R})$, as $h\to 0^+$.

Let $R=(r_{ij})\in O(2)$ such that 
\[
M=R^T\diag(\lambda_1,\lambda_2)R
.
\]
Then 
\[
\mu
=\sum_{i,j=1}^2m_{ij}X_i\ot X_j
=\sum_{i=1}^2
\lambda_ia_i\ot a_i,
\]
where $a_i=r_{i1}X_1+r_{i2}X_2$. Define $b_i=\lambda_1^\frac12r_{1i}+\lambda_2^\frac12r_{2i}$, $i=1,2$, and 
\[
\psi_\delta \circ\a(x_1,x_2)
=\zeta_\delta (b_1x_1+b_2x_2)
(\lambda_1^\frac12a_1-\lambda_2^\frac12a_2).
\]
Then 
\[
D_{\partial_i}\psi_\delta 
=b_i\zeta_\delta '(b_1x_1+b_2x_2)
(\lambda_1^\frac12a_1-\lambda_2^\frac12a_2)
+O(\delta).
\]
By $\<X_i,\partial_j\>=\delta_{ij}$ and $b_1X_1+b_2X_2=\lambda_1^\frac12a_1
+\lambda_2^\frac12a_2$, we obtain 
\beq
D\psi_\delta 
=&&\zeta_\delta '(b_1x_1+b_2x_2)
(\lambda_1^\frac12a_1-\lambda_2^\frac12a_2)
\ot (b_1X_1+b_2X_2)
+O(\delta)
\nonumber\\
=&&\zeta_\delta '(b_1x_1+b_2x_2)
(\lambda_1^\frac12a_1-\lambda_2^\frac12a_2)
\ot (\lambda_1^\frac12a_1
+\lambda_2^\frac12a_2)
+O(\delta).
\nonumber
\eeq
In particular, 
\[
\sym D\psi_\delta
=\begin{cases}
\lambda_1a_1\ot a_1-\lambda_2a_2\ot a_2+O(|\ln h|^{-1})
,\qquad \mbox{if}\quad(x_1,x_2)\in \Om^+,\\
\lambda_2a_2\ot a_2-\lambda_1a_1\ot a_1+O(|\ln h|^{-1})
,\qquad \mbox{if}\quad(x_1,x_2)\in \Om^-,\\
\end{cases}
\]
where
\[
\Om^+
=\left\{(x_1,x_2)\in [0,1]^2:\ \delta^{-1}(b_1x_1+b_2x_2)\in \left[k,k+\frac12\right),\ k\in\mathbb{Z}\right\},
\]
and
\[
\Om^-=[0,1]^2\backslash \Om^+.
\]
Define 
\[
\varphi_\delta \circ\a(x_1,x_2)
=\begin{cases}
\zeta_\delta (2\lambda_2^\frac12(r_{21}x_1+r_{22}x_2)),&\quad \mbox{if}\quad(x_1,x_2)\in\Om^+,\\
\zeta_\delta (-2\lambda_1^\frac12(r_{11}x_1+r_{12}x_2)),&\quad \mbox{if}\quad(x_1,x_2)\in\Om^-.
\end{cases}
\]
Then $\varphi_\delta \in W^{1,\infty}(S)$ since if $(x_1,x_2)\in \overline{\Om^+}\cap\overline{\Om^-}$, then 
\[
k=2\delta^{-1}(b_1x_1+b_2x_2)
\in \mathbb{Z}.
\]
Thus, 
\[
\zeta_\delta (2\lambda_2^\frac12(r_{21}x_1+r_{22}x_2))
=
\zeta_\delta (2\lambda_2^\frac12(r_{21}x_1+r_{22}x_2)
-\delta k)
=
\zeta_\delta (-2\lambda_1^\frac12(r_{11}x_1+r_{12}x_2)).
\]
Furthermore, 
\[
D\varphi_\delta \circ\a(x_1,x_2)
=\begin{cases}
2\lambda_2^\frac12\zeta_\delta '(2\lambda_2^\frac12(r_{21}x_1+r_{22}x_2))a_2,&\quad \mbox{if}\quad(x_1,x_2)\in\Om^+,\\
-2\lambda_1^\frac12\zeta_\delta '(-2\lambda_1^\frac12(r_{11}x_1+r_{12}x_2))a_1,&\quad \mbox{if}\quad(x_1,x_2)\in\Om^-,
\end{cases}
\]
and
\[
(D\varphi_\delta \ot D\varphi_\delta )\circ\a(x_1,x_2)
=\begin{cases}
4\lambda_2a_2\ot a_2+O(|\ln h|^{-1}),&\quad \mbox{if}\quad(x_1,x_2)\in\Om^+,\\
4\lambda_1a_1\ot a_1+O(|\ln h|^{-1}),&\quad \mbox{if}\quad(x_1,x_2)\in\Om^-.
\end{cases}
\]
Therefore, 
\[
\sym D\psi_\delta +D\varphi_\delta \ot D\varphi_\delta =\mu+O(|\ln h|^{-1})\quad a.e. \quad x\in S.
\]
Applying Lemma \ref{approx} to $\varphi_\delta$ with $r=\sqrt2$,  $\sigma=h^{\frac\b4}|\ln h|^{-3}$, 
$v=\delta^{-1}(b_1,b_2)$, $p\ge2$, we obtain $\tilde \varphi_h$ such that 
\beq
\|\tilde \varphi_h-\varphi_{\delta(h)}\|_{L^p(S)}
\le&&
C|\ln h|^{-3}h^{\frac\b4},\label{phi0}
\\
\|\nabla\tilde \varphi_h-\nabla \varphi_{\delta(h)}\|_{L^p(S)}
\le&&
C|\ln h|^{-\frac2p},\label{phi1}
\\
\|\nabla^2\tilde \varphi_h\|_{L^p(S)}
\le &&
C|\ln h|^3
h^{-\frac\b4}.\label{phi2}
\eeq
For $x\in S$, $t\in(-h/2,h/2)$, define $y_h(x+t\n(x))=h^{\frac\b2}\tilde y_h(x+t\n(x))$, where 
\[\tilde y_h(x+t\n(x))
=(W_h+w_h\n)(x)
+t(\xi_h+\zeta_h\n)(x)
,\]
and
\beq
W_h=&&W+\psi_{\delta(h)},
\nonumber\\
w_h=&&w+h^{-\frac\b4}\tilde\varphi_h,
\nonumber\\
\xi_h
=&&-h^{-\frac\b4}D\tilde \varphi_h-Dw+\nabla\n W+a,
\nonumber\\
\zeta_h
=&&-\frac12|D\tilde \varphi_h|^2+b.\nonumber
\eeq
Then for $x\in S$, $t\in(-1/2,1/2)$, we have 
\beq
\nabla \tilde y_h\circ\pi_h(x,t)
=&&
\left(
\begin{matrix}
DW_h+w_h\nabla\n+th(D\xi_h+\zeta_h\nabla\n)&&&&\xi_h\\
[Dw_h-\nabla\n W_h+th(D\zeta_h-\nabla\n \xi_h)]^T&&&&\zeta_h
\end{matrix}
\right)
\nonumber\\
=&&
A_0+A_1+thA_2+h^{-\frac\b4}A_3
-\frac12|D\tilde \varphi_h|^2\n\ot\n,
\nonumber
\eeq
where 
\beq
A_0(x)
=&&
\left(
\begin{matrix}
DW+w\nabla\n+D\psi_{\delta}
&&&&
-Dw+\nabla\n W+a
\\
Dw-\nabla\n W
&&&&b
\end{matrix}
\right),
\nonumber\\
A_1(x,t)
=&&
\left(
\begin{matrix}
h^{-\frac\b4}\tilde\varphi_h\nabla\n
-th^{1-\frac\b4}D^2\tilde \varphi_h
-\frac12th|D\tilde \varphi_h|^2\nabla\n
&&&&
0
\\
-\nabla\n \psi_{\delta}
-th(D^2\tilde \varphi_h)D\tilde \varphi_h
+th^{1-\frac\b4}\nabla\n D\tilde \varphi_h
&&&&0
\end{matrix}
\right),
\nonumber\\
A_2(x)
=&&
\left(
\begin{matrix}
-D(Dw-\nabla\n W-a)+b\nabla\n
&&&&0\\
[Db
+\nabla\n (Dw-\nabla\n W-a)]^T
&&&&0
\end{matrix}
\right),
\nonumber\\
A_3(x)
=&&
\left(
\begin{matrix}
0&&&&
-D\tilde \varphi_h
\\
(D\tilde\varphi_h)^T
&&&&0
\end{matrix}
\right).
\nonumber
\eeq
By (\ref{phi0}), (\ref{phi1}) and (\ref{phi2}), we have $\nabla \tilde y_h\circ\pi_h$ is uniformly bounded in $L^\infty(U,\mathbb{R}^{3\times3}_{\sym})$ and
\[
\|A_1(\cdot,t)\|_{L^4(S)}
\le 
C(|\ln h|^{-1}
+h^{1-\frac\b2}|\ln h|^3
+h^{1-\frac\b4}),\quad\mbox{for all}\quad t\in(-1/2,1/2).
\]
Let $B_h=A_0+A_1+thA_2
+\frac12|D\tilde \varphi_h|^2\n\ot\n$. Then $B_h$ is uniformly bounded in $L^4(U,\mathbb{R}^{3\times3}_{\sym})$ for all sufficiently small $h$. We have 
\beq
&&\frac12h^{-\frac\b2}(F_h^TF_h-I)(x,t)
\nonumber\\
=&&(\frac12h^{\frac\b2}\nabla^T \tilde y_h\nabla \tilde y_h
+\sym \nabla\tilde y_h)\circ\pi_h(x,t)
\nonumber\\
=&&
\frac12h^{\frac\b2}(B_h+h^{-\frac\b4}A_3)^T
(B_h+h^{-\frac\b4}A_3)
\nonumber\\
&&
+\left(
\begin{matrix}
\sym DW+w\Pi+\sym D\psi_{\delta}
&&&&
\frac12a
\\
\frac12a^T
&&&&-\frac12|D\tilde \varphi_h|^2+b
\end{matrix}
\right)
+\sym(A_1+thA_2)\nonumber\\
=&&
\left(
\begin{matrix}
\sym DW+w\Pi
+\sym D\psi_{\delta}+\frac12D\tilde \varphi_h\ot D\tilde \varphi_h
&&&&
\frac12a
\\
\frac12a^T
&&&&b
\end{matrix}
\right)
\nonumber\\
&&
+h^{\frac\b2}(\frac12B_h^TB_h+\sym A_3^TB_h)
+\sym(A_1+thA_2).
\nonumber
\eeq
We conclude 
\[
\frac12h^{-\frac\b2}(F_h^TF_h-I)
\to \left(
\begin{matrix}
\sym DW+w\Pi
+\mu
&&&&
\frac12a
\\
\frac12a^T
&&&&b
\end{matrix}
\right)
\quad\mbox{strongly in}\quad L^2(U,\mathbb{R}^{3\times3}_{\sym}).
\]
In view that $\|\nabla y_h\|_{L^\infty}=h^\frac\b2\|\nabla \tilde y_h\|_{L^\infty}\le Ch^\frac\b2$, we obtain 
\[
\dist(F_h,SO(3))
=\dist(I+\nabla y_h\circ\pi_h,SO(3))
\le C(|\nabla y_h|+|\nabla y_h|^2)\le Ch^\frac\b2.
\]
By \cite[Lemma A.1 and Lemma A.6]{Conti}, there exists a measurable mapping $R_h:U\to SO(3)$ such that $R_h^TF_h-I$ is symmetric and  moreover,
\[
|R_h^TF_h-I|=|F_h-R_h|=\dist(F_h,SO(3))\le Ch^\frac\b2.
\]
Let $G_h=h^{-\frac\b2}[R_h^TF_h-I]$. Then $F_h=R_h(h^{\frac\b2}G_h+I)$ and 
\[
\left\|G_h-\frac12h^{-\frac\b2}(F_h^TF_h-I)
\right\|_{L^2(U,\mathbb{R}^{3\times3}_{\sym})}
=\frac12h^\frac\b2\|G_h\|^2_{L^2(U,\mathbb{R}^{3\times3}_{\sym})}
\le Ch^\frac\b2.\]
Therefore, since $\Q_3$ is a quadratic form, we have 
\beq
\frac1{h^\b}\int_U\W(F_h)dxdt
=&&
\frac1{h^\b}\int_U\W(h^\frac\b2G_h+I)dxdt
\nonumber\\
=&&
\frac1{h^\b}\int_U\frac12\Q_3(h^\frac\b2G_h)
+o(h^\b |G_h|^2)dxdt
\nonumber\\
=&&
\int_U\left(\frac12\Q_3(G_h)
+\frac{o(h^\b |G_h|^2)}{h^\b|G_h|^2}|G_h|^2\right)dxdt
\nonumber\\
\to&&
\frac12\int_{-\frac12}^\frac12\int_S\Q_3\left(
\begin{matrix}
\sym DW+w\Pi
+\mu
&&&&
\frac12a
\\
\frac12a^T
&&&&b
\end{matrix}
\right)(x)
dxdt
\nonumber\\
=&&
\frac12\int_S\Q_2(\sym DW+w\Pi+\mu)
dx.
\nonumber
\eeq

\qed

\appendix 
\section{Asymptotic Coordinate System on Ruled Surfaces} 
Let $\b(s,t)=\sigma(s)+t\delta(s)$, $s\in[-\varepsilon_0,1+\varepsilon_0]$, $t\in\mathbb{R}$ and $M=\b([-\varepsilon_0,1+\varepsilon_0]\times\mathbb{R})$ be a smooth ruled surface with negative Gaussian curvature. We shall construct an asymptotic coordinate system covering $S=\b([0,1]^2)$. 

Let $a=-\frac{\Pi(\b_{s},\b_{s})}{2\Pi(\b_{s},\b_{t})}$, $X=\b_{s}+a\b_{t}$. Then 
\[
\Pi(X,X)
=\Pi(\b_{s},\b_{s})+2a\Pi(\b_{s},\b_{t})
+a^2\Pi(\b_{t},\b_{t})=0.
\]
\begin{lem}\label{LemmaA1}For any $g\in C^\infty(S)$, there exists $f\in C^\infty(S)$ such that 
\beq
Xf=g.\label{Xf}
\eeq
\end{lem}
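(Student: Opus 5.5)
The plan is to solve the first-order transport equation $Xf=g$ by the method of characteristics, after checking that the vector field $X=\b_s+a\b_t$ is smooth and nowhere vanishing on a neighborhood of $S$ and has $s$-component identically $1$. Since the Gaussian curvature is negative and $\Pi(\b_t,\b_t)=\<\nabla_{\b_t}\n,\b_t\>=0$ for a ruled surface (the rulings are asymptotic straight lines), we have $\ka\,\det(\mbox{metric})=-\Pi(\b_s,\b_t)^2<0$. Hence $\Pi(\b_s,\b_t)\neq0$ on $\b([-\varepsilon_0,1+\varepsilon_0]\times\mathbb{R})$, so $a$ is smooth there.

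In the parameters $(s,t)$, the field $X$ corresponds to $\partial_s+a(s,t)\partial_t$. Its integral curves are graphs $t=\gamma(s)$ solving the ODE $\gamma'(s)=a(s,\gamma(s))$. I would consider the flow $\Phi(s,\tau)$ determined by $\Phi(0,\tau)=\tau$, meaning the solution starting at the point $(0,\tau)$ on the transversal curve $\{s=0\}$. The map $(s,\tau)\mapsto(s,\Phi(s,\tau))$ is a local diffeomorphism, because its Jacobian is $\partial_\tau\Phi=\exp\int_0^s a_t\,ds>0$. It is also injective, by uniqueness of ODE solutions.

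The main obstacle is global existence: every point of $[0,1]^2$ must lie on a characteristic that reaches $\{s=0\}$ within the parameter domain. Solutions of the ODE could blow up in $t$ before $s$ reaches the required value. I would handle this with a compactness argument. Through each point $(s_0,t_0)\in[0,1]^2$, solve the ODE backwards to $s=0$. To guarantee existence, I would note that $a$ is smooth on the closed strip $[-\varepsilon_0,1+\varepsilon_0]\times\mathbb{R}$. If needed, I would first cut $a$ off outside a large $t$-interval $[-T,T]$, replacing it with a bounded smooth function that agrees with $a$ on a neighborhood of the region the characteristics from $[0,1]^2$ sweep out. Equivalently, only the values of $f$ on $S$ matter, so we may modify $X$ away from $S$. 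With a globally Lipschitz, bounded right-hand side, solutions exist on all of $s\in[-\varepsilon_0,1+\varepsilon_0]$, and the map $(s,\tau)\mapsto(s,\Phi(s,\tau))$ becomes a global diffeomorphism of the strip.

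Finally, in the coordinates $(s,\tau)$ the field $X$ is $\partial_s$, so I would define
\[
f(s,\tau)=\int_0^s g(\sigma,\Phi(\sigma,\tau))\,d\sigma,
\]
extending $g$ smoothly off $S$. This $f$ is smooth by smooth dependence of ODE solutions on initial data. Pulling back through the diffeomorphism gives $f\in C^\infty(S)$ with $Xf=g$, which is (\ref{Xf}).
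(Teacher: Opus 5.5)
Your proof is correct, but it is organized differently from the paper's. The paper never makes the characteristic flow global. For each $x\in[0,1]$ it uses only local solvability of $\xi'=a\circ\b(\tau,\xi)$, $\xi(x)=t$, for $\tau$ in a short interval $I$ around $x$ and $t\in\overline{J}\supset[0,1]$. This gives flow boxes $\varphi(s,t)=\b(s,\xi(s;x,t))$ in which $X$ becomes $\partial_s$. Finitely many of these boxes, with $x_1<\cdots<x_n$ and consecutive $s$-intervals overlapping, cover $S$. The paper then builds $f$ inductively: on $U_{i+1}$ it integrates $g$ from the transversal $s=a_{i+1}$, taking as initial data a smooth extension $\bar f_i$ of the values already fixed by $f_i$. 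Agreement on $\overline{U_i}\cap\overline{U_{i+1}}$ is checked by uniqueness along characteristics.

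You instead exploit that only $Xf=g$ on $S$ is required. Cutting $a$ off in $t$ (say $\tilde a=\chi(t)a$ with $\chi\equiv1$ on $[-1,2]$ and compactly supported) makes the characteristic ODE bounded and globally Lipschitz. Then one chart $(s,\tau)\mapsto(s,\Phi(s,\tau))$ straightens the modified field on the whole strip, and $f$ is a single explicit integral. Restricted to $S$, where the modified field equals $X$, this gives $Xf=g$. Your route trades the inductive gluing and overlap bookkeeping for a harmless modification of $X$ off $S$; the paper's route never alters $X$ and needs only local ODE theory.

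Two small remarks. First, the cutoff need only agree with $a$ near $[0,1]^2$, not on the region swept out by characteristics through $[0,1]^2$. That region may be unbounded, which is exactly the difficulty, so your second formulation (modifying $X$ away from $S$) is the one to keep. Second, your observation that $\Pi(\b_t,\b_t)=0$ together with $\ka<0$ forces $\Pi(\b_s,\b_t)\neq0$, so that $a$ is smooth, fills in a point the paper leaves implicit.
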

\begin{proof}
There Fix $x\in[0,1]$. By the compactness of $[0,1]$, there exists $\varepsilon>0$ such that 
\[\xi'(\tau)
=a\circ\b(\tau,\xi(\tau)),\quad \xi(x)=t
\]
possesses a unique smooth solution $\xi(\tau;x,t)$ for all $(\tau,t)\in \overline{I\times J}$, where $I=(x-\varepsilon,x+\varepsilon)$, $J=(-\varepsilon,1+\varepsilon)$. 
Define $\varphi(s,t)=\b(s,\xi(s;x,t))$. Then 
\[
\varphi_{s}(s,t)=\b_s(s,\xi(s;x,t))
+\frac{\partial\xi(s;x,t)}{\partial s}
\b_t(s,\xi(s;x,t))
=X\circ\varphi(s,t),
\]
and 
\[
\varphi(x,t)=\b(x,\xi(x;x,t))=\b(x,t),
\]
for all $(s,t)\in I\times J$. 
If $\varphi(s_1,t_1)=\varphi(s_2,t_2)$ for some $(s_1,t_1),(s_2,t_2)\in I\times J$, then  
\[
\b(s_1,\xi(s_1;x,t_1))
=\b(s_2,\xi(s_2;x,t_2)).
\]
Thus, $s_1=s_2$ and $\xi(s_1;x,t_1)=\xi(s_2;x,t_2)=\xi(s_1;x,t_2)$. By the invertibility and uniqueness of the solutions of ODE, we obtain $t_1=t_2$. Therefore, $\varphi:I\times J\to U:=\varphi(I\times J)$ is a diffeomorphism. 

By the compactness of $S$, there exists finitely many $(x_{i},\varepsilon_i,\varphi_i,I_i,J_i,U_i)$ such that 
\[
S\subset\cup_{i=1}^nU_i,
\]
and
\[\varphi_i(s,t)=\b(s,\xi(s;x_{i},t))
\quad\mbox{for all}\quad
(s,t)\in I_i\times J_i,
\]
\[0\le x_1< x_2< \cdots<x_n\le 1,
\quad x_{i+1}-\varepsilon_{i+1}\in I_i
.
\]
\begin{figure}
\centering
\includegraphics[width=0.5\textwidth]{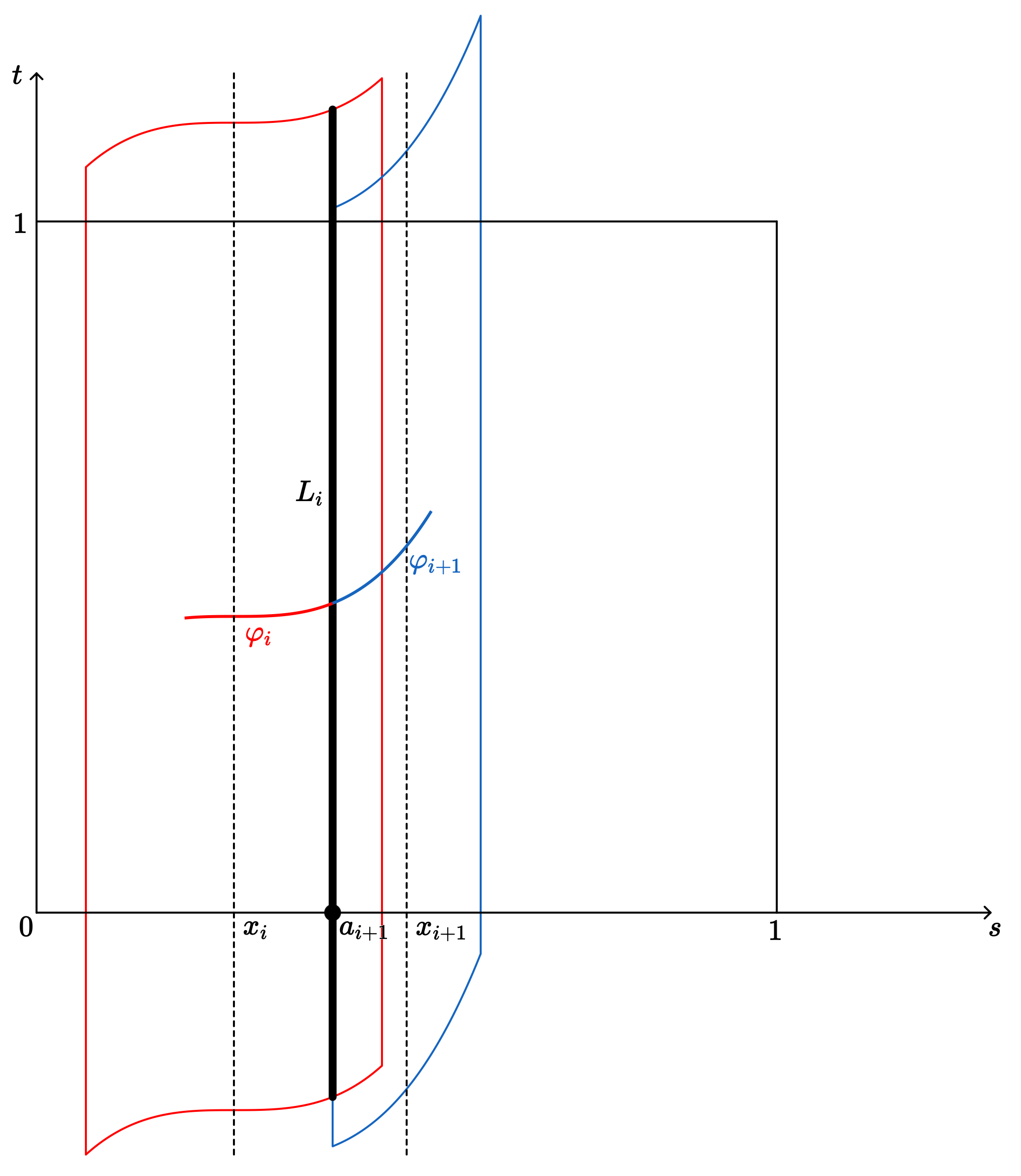}
\end{figure}
Let $V_i=\cup_{j=1}^i\overline {U_i}$ and $a_i=x_{i}-\varepsilon_i$, $i=1,\cdots,n$. 
We define $f$ on $S$ by induction. 

For $i=1$, let 
\[
f\circ\varphi_1(s,t)
=\int_0^{s}g\circ\varphi_1(\tau,t)d\tau,
\]
for any $(s,t)\in\overline{I_1\times J_1}$. 
Then (\ref{Xf}) holds on $V_1$. 

Suppose that $f$ has been defined on $V_i$ satisfying (\ref{Xf}). Denote 
$f_i=f|_{\overline{U_i}}$. 
For $i+1$, 
\[
L_i=\xi(a_{i+1};x_{i},\overline{J_i})
\]
is a closed interval by the continuity of $\xi(a_{i+1};x_{i},\cdot)$ on $\overline{J_i}=[-\varepsilon_i,1+\varepsilon_i]$. 
Note that 
\[
\b(a_{i+1},L_i)=\b(a_{i+1},\xi(a_{i+1};x_{i},\overline{J_i}))
=\varphi_i(a_{i+1},\overline{J_i})\subset \overline{U_i}
\]
since $a_{i+1}\in I_i$. 
Smoothly extend $f_i\circ\b(a_{i+1},\cdot)|_{L_i}$ to some $\bar f_i\in C^\infty(\mathbb{R})$. 

For all  $(s,t)\in \overline{I_{i+1}\times J_{i+1}}$, define 
\[
f_{i+1}\circ\varphi_{i+1}(s,t)
=\bar f_i(\xi(a_{i+1};x_{i+1},t))
+\int_{a_{i+1}}^{s}g\circ\varphi_{i+1}(\tau,t)d\tau.
\]
Then $Xf_{i+1}=g$ holds on $\overline{U_{i+1}}$. 
If $t\in \overline{J_{i+1}}$ satisfy $\xi(a_{i+1};x_{i+1},t)\in L_i$, then 
\beq
f_{i+1}\circ\b(a_{i+1},\xi(a_{i+1};x_{i+1},t))
=&&f_{i+1}\circ\varphi_{i+1}(a_{i+1},t)
=\bar f_i(\xi(a_{i+1};x_{i+1},t))
\nonumber\\
=&&f_i\circ\b(a_{i+1},\xi(a_{i+1};x_{i+1},t)).
\nonumber
\eeq
Hence, $f_i=f_{i+1}$ on $\overline{U_i}\cap \overline{U_{i+1}}$ by the uniqueness of solutions of ODE. Extending $f$ on $\overline{U_{i+1}}$ by $f_{i+1}$, we finish the induction. 
\end{proof}

\begin{lem}\label{LemmaA2}
There exist $b\in C^\infty(S)$, $b\ne0$ on $S$, $y_{\min}$, $y_{\max}\in C^\infty([0,1])$ and an asymptotic coordinate system $\a:\om\to S$ such that $S=\a(\om)$, 
\beq
\om=\bigcup_{x\in[0,1]}\{x\}\times[y_{\min}(x),y_{\max}(x)],\label{yminymax}
\eeq
 and 
\[
\a_x=X,\quad \a_y=b\b_{t}.
\]
\end{lem}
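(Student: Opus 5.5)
The plan is to build $\a$ as the flow of $X$ transported along a reparametrized ruling direction, with the $s$-coordinate of $\b$ serving as the first coordinate. Since $\Pi(\b_t,\b_t)=0$ (the rulings are straight lines and asymptotic), both $X$ and $\b_t$ are asymptotic directions. They are linearly independent because $X=\b_s+a\b_t$. So it suffices to produce coordinates $(x,y)$ with $\a_x=X$ and $\a_y$ parallel to $\b_t$.

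Step one is to choose $x=s$, because $X=\b_s+a\b_t$ has $ds(X)=1$. We then look for a function $y$ on $S$ with $Xy=0$ and $\b_t y\neq0$. Given such a $y$, the map $(s,t)\mapsto(s,y(s,t))$ is a diffeomorphism onto its image, since it is monotone in $t$ for each fixed $s$. Let $\a$ be its inverse composed with $\b$.
\begin{itemize}
\item Because $Xs=1$ and $Xy=0$, the vector $X$ is the coordinate field $\a_x$.
\item Because $\b_t s=0$, we get $\a_y=(\b_t y)^{-1}\b_t$, so $b=1/(\b_t y)$, which is smooth and nonvanishing.
\item For each $x$, the image of $[0,1]$ in $t$ is an interval $[y_{\min}(x),y_{\max}(x)]$, with $y_{\min}(x)=\min(y(x,0),y(x,1))$ and $y_{\max}$ the corresponding maximum. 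This gives (\ref{yminymax}).
\end{itemize}
With the sign convention $\b_t y>0$ below, $y_{\min}(x)=y(x,0)$ and $y_{\max}(x)=y(x,1)$, so both are smooth.

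Step two is the construction of $y$, by the method of characteristics, exactly as in Lemma \ref{LemmaA1}. The integral curves of $X$ in $(s,t)$ coordinates are the graphs $t=\xi(s;x_0,t_0)$ of solutions to $\xi'=a(s,\xi)$. Define $y(s,t)$ as the initial value $t_0$ at $s=0$ of the characteristic through $(s,t)$, i.e. $y(s,\xi(s;0,t_0))=t_0$.

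The main obstacle is global existence: characteristics started at $s=0$ might leave the strip before reaching $s\in[0,1]$. Here we use that $M$ is parametrized on $[-\varepsilon_0,1+\varepsilon_0]\times\mathbb{R}$, so $t$ ranges over all of $\mathbb{R}$.
\begin{itemize}
\item We need $a=-\Pi(\b_s,\b_s)/(2\Pi(\b_s,\b_t))$ to be defined on the relevant region. On a ruled surface, $\kappa<0$ forces $\Pi(\b_s,\b_t)\neq0$, since $\Pi(\b_t,\b_t)=0$.
\item On the region swept out, $a$ is smooth. It is bounded on compact sets, and we only need the characteristics to remain in the compact set $\b([0,1]\times[-T,T])$.
\item I would argue as in Lemma \ref{LemmaA1}: solve backward from each point of $S$ to $s=0$. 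The characteristic through $(s,t)$ is defined on a neighborhood of $[0,s]$, and compactness of $S$ together with continuous dependence bounds the range of $t$.
\end{itemize}
If backward solvability at some point fails, I would fall back on the patching argument of Lemma \ref{LemmaA1}. That is, solve $Xy=0$ chart by chart, with $y\circ\varphi_1(s,t)=t$ and consistent extension across overlaps, using $X y=0$ as the case $g=0$ with a prescribed transversal value.

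Finally, monotonicity $\b_t y>0$ follows from uniqueness: distinct characteristics never cross, so $t_0\mapsto\xi(s;0,t_0)$ is strictly increasing. Its derivative is $\exp\int_0^s\partial_ta\,d\tau>0$ by the linearized equation, so $\b_t y$ is its positive reciprocal. Smoothness of $b$, $y_{\min}$ and $y_{\max}$ follows from smooth dependence on initial data.
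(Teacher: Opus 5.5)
\textbf{Gap in the construction of $y$.} Your overall structure matches the paper's. You take $x=s$, find $y$ on $S$ with $Xy=0$ and $\beta_t y\neq0$, and invert $(s,t)\mapsto(s,y(\beta(s,t)))$ using monotonicity in $t$. The gap is in how you build $y$. Defining $y$ as the value at $s=0$ of the characteristic through $(s,t)$ requires every characteristic through a point of $S$ to exist backward on all of $[0,s]$, and this fails in general. Since $\beta_{tt}=0$ and $\beta_{st}=\delta'$, one gets
$a(s,t)=-\det(\sigma''+t\delta'',\sigma'+t\delta',\delta)/\bigl(2\det(\delta',\sigma',\delta)\bigr)$.
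This is quadratic in $t$, so $\xi'=a(s,\xi)$ is a Riccati equation and its solutions can blow up at finite $s$.

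Concretely, take the hyperboloid $x_1^2+x_2^2-x_3^2=1$ with $\sigma(s)=(\cos 4s,\sin 4s,0)$ and $\delta(s)=(-\sin 4s,\cos 4s,1)$. This is an embedded ruled surface with $\kappa<0$, and one finds $a=-2(1+t^2)$. The characteristic through $\beta(1,0)\in S$ is $\xi(s)=\tan(2(1-s))$, which tends to $+\infty$ as $s\downarrow 1-\pi/4\in(0,1)$. So your $y$ is undefined at points of $S$. Your claim that compactness of $S$ plus continuous dependence bounds the range of $t$ is false: continuous dependence only gives existence on short $s$-intervals. Having $t\in\mathbb{R}$ available does not help against finite-time blow-up.

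\textbf{The fallback and the paper's fix.} Your fallback does not close the gap as written. The patching in Lemma \ref{LemmaA1} glues charts through an arbitrary smooth extension $\bar f_i$ of the values on the transversal $s=a_{i+1}$. With $g=0$ it produces some $y$ with $Xy=0$ (for instance $y\equiv0$), but nothing forces $\beta_t y\neq0$. To keep transversality you would have to choose every extension strictly increasing. You would then have to re-prove $\beta_t y>0$ chart by chart, as a composition of increasing flow maps with increasing extensions. Your final monotonicity argument, which differentiates $t_0\mapsto\xi(s;0,t_0)$ along a global characteristic from $s=0$, only covers the construction that fails.

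The paper avoids both issues. It applies Lemma \ref{LemmaA1} with $g=-\mathrm{div}\,X$, where no transversality is needed, to get $f$ with $\mathrm{div}(e^fX)=0$. Then $e^fQX$ is closed, hence equal to $Dy$ on the simply connected $S$. Now $Xy=e^f\langle QX,X\rangle=0$ and $\beta_t y=e^f\langle Q\beta_s,\beta_t\rangle\neq0$ hold automatically, because $\langle Q\beta_t,\beta_t\rangle=0$. No global characteristics or monotone gluing are required.
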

\begin{proof}
By Lemma \ref{LemmaA1}, there exists $f\in C^\infty(S)$ such that $\div(e^fX)=e^f(Xf+\div X)=0$. Then $e^fQX$ is a closed vector field on $S$. By the simply connectivity of $S$, there exists $y\in C^\infty(S)$ such that 
\[Dy=e^fQX=e^fQ(\b_{s}+a\b_{t}).
\]
We may assume $\<Q\b_{s},\b_{t}\>>0$ on $S$. 
For each $s\in[0,1]$, $h\circ\b(s,\cdot)$ increases on $[0,1]$ since 
\[
\frac{\partial}{\partial t}h\circ\b(s,t)
=\<Dh,\b_{t}\>\circ\b(s,t)
=b^{-1}\circ\b(s,t)
>0,
\]
where 
$b^{-1}=e^f\<Q\b_{s},\b_{t}\>$.

Let  $x(s,t)=s$. Define  
\[\psi:S\to \mathbb{R}^2, \quad 
\b(s,t)\mapsto (x(s,t),y\circ\b(s,t))\quad\mbox{for all}\quad(s,t)\in[0,1]^2.
\]
Then $\om:=\psi(S)$ satisfies (\ref{yminymax}) with 
\[y_{\min}(x)=y\circ\b(x,0),\quad
y_{\max}(x)=y\circ\b(x,1),\quad x\in[0,1].
\]
$\psi$ is locally diffeomorphic, in view that  
\[
\left(
\begin{matrix}
\psi_*\b_{s}\\
\psi_*\b_{t}
\end{matrix}
\right)
=\left(
\begin{matrix}
e_1+\<Dh,\b_{s}\>e_2\\
\<Dh,\b_{t}\>e_2
\end{matrix}
\right)
=\left(
\begin{matrix}
1&&-ab^{-1}\\
0&&b^{-1}
\end{matrix}
\right)
\left(
\begin{matrix}
e_1\\
e_2
\end{matrix}
\right),
\]
where $\{e_1,e_2\}$ is the standard basis of $\mathbb{R}^2$. 
$\psi$ is globally injective by the monotonicity of $y\circ\b(s,\cdot)$. 
Therefore, $\psi:S\to \om$ is diffeomorphic and 
$\a=\psi^{-1}:\om\to S$ is asymptotic by noting that 
\[
\left(
\begin{matrix}
\a_x\\
\a_y
\end{matrix}
\right)
=\left(
\begin{matrix}
\a_*e_1\\
\a_*e_2
\end{matrix}
\right)
=\left(
\begin{matrix}
(\psi_*)^{-1}e_1\\
(\psi_*)^{-1}e_2
\end{matrix}
\right)
=\left(
\begin{matrix}
1&&&a\\
0&&&b
\end{matrix}
\right)\left(
\begin{matrix}
\b_{s}\\
\b_{t}
\end{matrix}
\right)
=\left(
\begin{matrix}
X\\
b\b_{t}
\end{matrix}
\right).
\]

\end{proof}

\section{An Approximation Lemma}
Let $v\in\mathbb{R}^2$, $v\ne0$. Let 
\[
\Om^+
=\{x\in\mathbb{R}^2:\<x,v\>\in(k,k+1/2),\ k\in\mathbb{Z}\},
\]
and
\[
\Om^-
=\{x\in\mathbb{R}^2:\<x,v\>\in(k+1/2,k+1),\ k\in\mathbb{Z}\}.
\]
Then $\Ga=\partial\Om^+=\partial\Om^-$. Define 
\[
\H_v(\mathbb{R}^2)
=\{f^+\chi_{\Om^+}+f^-\chi_{\Om^-}:f^+\in W^{2,\infty}(\Om^+),\ f^-\in W^{2,\infty}(\Om^-),\ f^+=f^-\ \mbox{at}\ \Ga
\}.
\]

\begin{lem}\label{approx}
There exists a constant $C>0$ such that for any $r,\sigma>0$, $p\ge1$, $v\in\mathbb{R}^2$, $v\ne0$ with $4\sigma<|v|^{-1}<r$ and any $f\in \H_v(\mathbb{R}^2)\subset W^{1,\infty}(\mathbb{R}^2)$, there exists $\tilde f\in C^\infty(\mathbb{R}^2)$ such that 
\[
\|\tilde f-f\|_{L^p(B_r)}
\le
\sigma
\|\nabla f\|_{L^p(B_{2r})},
\]
\beq
\|\nabla\tilde f-\nabla f\|_{L^p(B_r)}
\le
C(r^2|v|\sigma)^\frac1p\|\nabla f\|_{L^\infty(B_{2r})}
+C\sigma
\big(\|\nabla^2 f^+\|_{L^p(B_{2r}^+)}
+\|\nabla^2 f^-\|_{L^p(B_{2r}^-)}
\big),\qquad\quad\label{nablatildef}
\eeq
\beq
\|\nabla^2\tilde f\|_{L^p(B_r)}
\le C(r^2|v|\sigma)^\frac1p\sigma^{-1}
\|\nabla f\|_{L^\infty(B_{2r})}
+C\big(\|\nabla^2 f^+\|_{L^p(B_{2r}^+)}
+\|\nabla^2 f^-\|_{L^p(B_{2r}^-)}
\big),\qquad\label{nabla2tildef}
\eeq
where $B_r=\{x\in\mathbb{R}^2:|x|<r\}$, $B_r^+=B_r\cap\Om^+$, $B_r^-=B_r\cap\Om^-$.

\end{lem}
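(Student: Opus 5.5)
The approach is standard mollification of $f$ at scale $\sigma$. Fix a radial mollifier $\rho\in C^\infty_0(B_1)$ with $\rho\ge0$ and $\int\rho=1$, set $\rho_\sigma=\sigma^{-2}\rho(\sigma^{-1}\cdot)$, and take $\tilde f=\rho_\sigma*f$. This is smooth, and on $B_r$ it only involves values of $f$ on $B_{r+\sigma}\subset B_{2r}$, because $\sigma<|v|^{-1}/4<r$.

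\textbf{Zeroth order.} For $x\in B_r$ we write
\[
\tilde f(x)-f(x)=\int\rho_\sigma(z)\big(f(x-z)-f(x)\big)\,dz=-\int\rho_\sigma(z)\int_0^1\nabla f(x-sz)\cdot z\,ds\,dz .
\]
Minkowski's inequality then gives $\|\tilde f-f\|_{L^p(B_r)}\le\sigma\|\nabla f\|_{L^p(B_{2r})}$, since $|z|\le\sigma$ and $\int\rho_\sigma=1$. This step needs only $f\in W^{1,\infty}$.

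\textbf{Strip decomposition.} Let $N_\sigma=\{x:\dist(x,\Ga)<\sigma\}$. The interface $\Ga$ is a family of parallel lines spaced $\frac1{2|v|}$ apart. Since $4\sigma<|v|^{-1}<r$, the set $B_{2r}$ meets at most $C r|v|$ of these lines, so $|N_\sigma\cap B_{r}|\le C r^2|v|\sigma$. This measure bound is the source of the factor $(r^2|v|\sigma)^{1/p}$.

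\textbf{Away from the interface.} On $B_r\setminus N_\sigma$ the ball $B_\sigma(x)$ lies entirely in $\Om^+$ or entirely in $\Om^-$. There $f$ agrees with $f^\pm\in W^{2,\infty}$. The same first-order Taylor argument applied to $\nabla f$ gives
\[
\|\nabla\tilde f-\nabla f\|_{L^p(B_r\setminus N_\sigma)}\le \sigma\big(\|\nabla^2f^+\|_{L^p(B^+_{2r})}+\|\nabla^2f^-\|_{L^p(B^-_{2r})}\big).
\]
On the same set, $\nabla^2\tilde f=\rho_\sigma*\nabla^2f^\pm$, which is bounded in $L^p$ by the same right-hand side, up to a constant.

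\textbf{Near the interface.} On $N_\sigma\cap B_r$ we use only crude bounds: $|\nabla\tilde f-\nabla f|\le 2\|\nabla f\|_{L^\infty(B_{2r})}$, and $|\nabla^2\tilde f|=|\nabla\rho_\sigma*\nabla f|\le C\sigma^{-1}\|\nabla f\|_{L^\infty(B_{2r})}$. Multiplying by $|N_\sigma\cap B_r|^{1/p}\le C(r^2|v|\sigma)^{1/p}$ produces exactly the first terms of (\ref{nablatildef}) and (\ref{nabla2tildef}).

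\textbf{Main obstacle.} The delicate point is the distributional identity $\nabla^2\tilde f=\rho_\sigma*(\nabla^2 f)$ away from the interface, and, more generally, making sure no singular jump term enters. The continuity condition $f^+=f^-$ on $\Ga$ ensures $\nabla f\in L^\infty$ with no jump part in $\nabla f$ itself. However, $\nabla^2f$ carries a measure on $\Ga$ because the normal derivative jumps. I avoid this measure entirely by writing $\nabla^2\tilde f=\nabla\rho_\sigma*\nabla f$ in the strip. Outside the strip the support of the convolution never reaches $\Ga$, so the measure does not contribute there either.

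Finally, the count of lines meeting $B_{2r}$ uses $|v|^{-1}<r$ so that the bound $Cr|v|$ dominates the "+1" term. With this, the constant $C$ is independent of $r,\sigma,v$ and $p$, as the statement requires.
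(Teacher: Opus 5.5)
Your proposal is correct and is essentially the paper's own proof. Both mollify $f$ at scale $\sigma$ and split $B_r$ into the $\sigma$-neighbourhood of the interface lines, of measure at most $Cr^2|v|\sigma$, and its complement. On the neighbourhood you use the crude bounds $2\|\nabla f\|_{L^\infty(B_{2r})}$ and $C\sigma^{-1}\|\nabla f\|_{L^\infty(B_{2r})}$ via $\nabla^2\tilde f=\nabla\rho_\sigma*\nabla f$; on the complement you use Taylor's formula and $\nabla^2\tilde f=\rho_\sigma*\nabla^2 f^\pm$. The paper additionally rotates so that $v=(a,0)$, which changes nothing.

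One wording slip: off the strip, the $L^p$ bound for $\nabla^2\tilde f$ is $\|\nabla^2 f^+\|_{L^p(B_{2r}^+)}+\|\nabla^2 f^-\|_{L^p(B_{2r}^-)}$ without the factor $\sigma$, not ``the same right-hand side''. That is exactly what the second-derivative estimate requires.
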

\begin{proof}Suppose that $v=(a,0)$, $a>0$. Let $\rho\in C^\infty(\mathbb{R}^2)$ satisfy $\rho\ge0$, $\supp \rho\subset B_1$ and $\int_{\mathbb{R}^2}\rho(x)dx=1$. 
Given $f=f^+\chi_{\Om^+}+f^-\chi_{\Om^-}\in \H_v(\mathbb{R}^2)$, let 
\[
\tilde f=\rho_\sigma*f\in C^\infty(\mathbb{R}^2),
\]
where $\rho_\sigma(x)=\sigma^{-2}\rho(\sigma^{-1}x)$. Let $\frac1{p'}=1-\frac1p$. Then 
\beq
\|\tilde f-f\|^p_{L^p(B_r)}
\le&&
\int_{B_r}\left|
\int_{\mathbb{R}^2}
\rho_\sigma(y)^{\frac1{p'}}
\rho_\sigma(y)^\frac1p |f(x-y)-f(x)|dy\right|^pdx
\nonumber\\
\le&&
\int_{B_r}\int_{B_\sigma}\rho_\sigma(y) 
\left|\int_0^1\<\nabla f(x-ty),y\>dt\right|^pdydx
\nonumber\\
\le&&
\sigma^p
\int_0^1\int_{B_\sigma}\rho_\sigma(y) 
\int_{B_r}|\nabla f(x-ty)|^pdxdydt
\nonumber\\
\le&&
\sigma^p
\|\nabla f\|^p_{L^p(B_{2r})}.
\label{L2tildef}
\eeq
Define 
\[
\Ga_\sigma
=\bigcup_{k\in\mathbb{Z}}\left[\frac{k}{2a}-\sigma,\frac{k}{2a}+\sigma\right]\times\mathbb{R}.
\]
Then 
\[
|B_r\cap\Ga_\sigma|\le Cr\sigma\cdot r/a^{-1}=Cr^2a\sigma,\]
and thus, 
\[
\|\nabla\tilde f-\nabla f\|^p_{L^p(B_r\cap\Ga_\sigma)}
\le
C|B_r\cap\Ga_\sigma|\|\nabla f\|^p_{L^\infty(B_{2r})}
\le
Cr^2a\sigma \|\nabla f\|^p_{L^\infty(B_{2r})}.
\]
An argument analogous to (\ref{L2tildef}) yields 
\[
\|\nabla\tilde f-\nabla f\|^p_{L^p(B_r\backslash\Ga_\sigma)}
\le
\sigma^p
\big(\|\nabla^2 f^+\|^p_{L^p(B_{2r}^+)}
+\|\nabla^2 f^-\|^p_{L^p(B_{2r}^-)}
\big).
\]
(\ref{nablatildef}) follows. 
If $x\in B_r^+\backslash\Ga_\sigma$, then 
\[
\nabla^2 \tilde f(x)
=
\int_{B_\sigma}\rho_\sigma(y)\nabla^2f^+(x-y)dy. 
\]
We obtain 
\[
\|\nabla^2\tilde f\|^p_{L^p(B_r^+\backslash\Ga_\sigma)}
\le \|\nabla^2f^+\|^p_{L^p(B_{2r}^+)}.
\]
Similarly, 
\[
\|\nabla^2\tilde f\|^p_{L^p(B_r^-\backslash\Ga_\sigma)}
\le \|\nabla^2f^-\|^p_{L^p(B_{2r}^-)}.
\]
If $x\in B_r\cap \Ga_\sigma$, then 
\beq
|\tilde f_{x_ix_j}(x)|
=&&\left|\partial_{x_i}
\int_{\mathbb{R}^2}\rho_\sigma(y)f_{x_j}(x-y)dy\right|
=\sigma^{-2}\left|\partial_{x_i}
\int_{B_\sigma}\rho(\sigma^{-1}(x-y))f_{x_j}(y)dy\right|
\nonumber\\
=&&\sigma^{-3}\left|
\int_{B_\sigma}\rho_{x_i}(\sigma^{-1}(x-y))f_{x_j}(y)dy\right|
\le C\sigma^{-1}\|\nabla f\|_{L^\infty(B_{2r})}.
\nonumber
\eeq
Hence, 
\[
\|\nabla^2\tilde f\|^p_{L^p(B_r\cap \Ga_\sigma)}
\le C|B_r\cap\Ga_\sigma|\cdot\sigma^{-p}\|\nabla f\|^p_{L^\infty(B_{2r})}
\le Cr^2a\sigma^{1-p}\|\nabla f\|^p_{L^\infty(B_{2r})}.
\]
We obtain (\ref{nabla2tildef}).

\end{proof}

 \end{document}